\newcommand{\G}{\Gamma}
\newcommand{\Z}{\mathbb{Z}}
\newcommand{\Aut}{\mathrm{Aut}}
\def\Ga{\Gamma}
\def\a{\alpha}
\def\s{\sigma}
\def\la{\langle}
\def\ra{\rangle}
\def\Cay{{\rm Cay}}
\def\AGL{{\rm AGL}}
\def\ord{{\rm ord}}
\def\int{\cap}
\def\choose#1#2{\left (\!\!\begin{array}{c}#1\\#2\end{array}\!\!\right )}
\long\def\delete#1{}
\newtheorem{theorem}{Theorem}[section]
\newtheorem{corollary}[theorem]{Corollary}
\newtheorem{lemma}[theorem]{Lemma}
\newtheorem{example}[theorem]{Example}
\newcommand{\be}{\begin{equation}}
\newcommand{\ee}{\end{equation}}
\newcommand{\bea}{\begin{eqnarray}}
\newcommand{\eea}{\end{eqnarray}}
\newcommand{\bean}{\begin{eqnarray*}}
\newcommand{\eean}{\end{eqnarray*}}
\def\non{\nonumber}
\title{Rotational circulant graphs}
\author{Alison Thomson and
Sanming Zhou\\ \\
Department of Mathematics and Statistics\\
The University of Melbourne\\
Parkville, VIC 3010, Australia\\
\small{\it smzhou@ms.unimelb.edu.au}}
\date{August 23, 2013}
\begin{document}
\openup 0.5\jot 
\maketitle

\begin{abstract}
A Frobenius group is a transitive permutation group which is not regular but only the identity element can fix two points. Such a group can be expressed as the semi-direct product $G = K \rtimes H$ of a nilpotent normal subgroup $K$ and another group $H$ fixing a point. A first-kind $G$-Frobenius graph is a connected Cayley graph on $K$ with connection set an $H$-orbit $a^H$ on $K$ that generates $K$, where $H$ has an even order or $a$ is an involution. It is known that the first-kind Frobenius graphs admit attractive routing and gossiping algorithms. A complete rotation in a Cayley graph on a group $G$ with connection set $S$ is an automorphism of $G$ fixing $S$ setwise and permuting the elements of $S$ cyclically. It is known that if the fixed-point set of such a complete rotation is an independent set and not a vertex-cut, then the gossiping time of the Cayley graph (under a certain model) attains the smallest possible value. In this paper we classify all first-kind Frobenius circulant graphs that admit complete rotations, and describe a means to construct them. This result can be stated as a necessary and sufficient condition for a first-kind Frobenius circulant to be 2-cell embeddable on a closed orientable surface as a balanced regular Cayley map. We construct a family of non-Frobenius circulants admitting complete rotations such that the corresponding fixed-point sets are independent and not vertex-cuts. We also give an infinite family of counterexamples to the conjecture that the fixed-point set of every complete rotation of a Cayley graph is not a vertex-cut.  

{\bf Key words:}  Circulant graph; Frobenius group; Frobenius graph; gossiping; balanced regular Cayley map; HARTS; hexagonal mesh

{\bf AMS Subject Classification (2010):} 05C25, 68M10, 68R10 
\end{abstract}

\section{Introduction}
\label{sec:int}

\subsection{Motivation}
\label{subsec:motiv}

Given a group $G$ with identify element $1$ and a subset $S$ of $G$ with $1 \not \in S$ and $S = S^{-1} := \{s^{-1}: s \in S\}$, the {\em Cayley graph} on $G$ with {\em connection set} $S$, denoted $\Cay(G, S)$, is the graph with vertex set $G$ such that $g, h \in G$ are adjacent if and only if $gh^{-1} \in S$. A Cayley graph on a cyclic group is called a {\em circulant graph} (or  {\em circulant} for short), or a multi-loop network as used in computer science.  

It is widely recognized \cite{AK, ABR, CF, Hey} that Cayley graphs provide good models for interconnection networks, and as such they have been studied extensively in computer science for more than two decades. In particular, since the 1970s \cite{WC}, circulant graphs have attracted attention due to their regularity, simple structure and rich symmetry. We refer the reader to \cite{BCH, Hwang1, Hwang} for surveys on circulant graphs in the context of network design with an emphasis on connectivity, diameter and efficiency in information dissemination. The reader may also consult \cite{Hey, LJD} for a survey on Cayley graphs as models for interconnection networks and an account of popular Cayley networks such as hypercubes, cube-connected cycles, star graphs, and so on.  

A. \textsc{Complete rotations.}~~Motivated by the need to construct fast gossiping algorithms, Bermond, Kodate and P\'erennes introduced \cite{BKP} the concept of complete rotation in Cayley graphs. (See \S \ref{subsec:def} for terminology not defined in this subsection.) Among other things they proved \cite[Corollary 15]{BKP} that, if a Cayley graph with $n$ vertices and valency $d$ admits a complete rotation whose set of fixed points is empty, then its gossiping time under the store-and-forward, all-port and full-duplex model \cite{BKP} is equal to the trivial lower bound $(n-1)/d$, which is the best one can hope for. They also proved \cite[Lemma 17]{BKP} that, if a Cayley graph admits a complete rotation whose fixed-point set is an independent set and not a vertex-cut, then its gossiping time is equal to $\lceil (n-1)/d \rceil$. (A {\em vertex-cut} of a graph is a subset of its vertex set whose removal increases the number of connected components.) They proved further that several popular network structures, including hypercubes, star graphs and multi-dimensional tori, admit complete rotations, and using this they determined the exact value of the gossiping time of these networks. In \cite{FA}, Fragopoulou and Akl introduced a similar notion of rotation and used it to construct efficient communication algorithms. What is called a rotation of a Cayley graph in \cite{FA} is a complete rotation (as used in \cite{BKP, HMP} and the present paper) that is also an inner automorphsim of the group defining the Cayley graph. Since complete rotations enable efficient and simple gossiping algorithms \cite{BKP, FA}, an interesting but challenging problem, as addressed in \cite{BKP}, is to classify or characterize Cayley graphs that admit a complete rotation. This problem was studied by Heydemann, Marlin and  P\'{e}rennes in \cite{HMP}, where they gave among other things a few group-theoretic conditions for the existence of a complete rotation in a Cayley graph. For example, they proved \cite[Corollary 3.1]{HMP} that a connected Cayley graph $\Cay(G, S)$ admits a complete rotation if and only if there exists a presentation $\la S | R \ra$ of $G$ such that the free group $F(S)$ has an automorphism that fixes $R$ and induces a cyclic permutation of $S$. Proving a conjecture of Bermond, Kodate and P\'erennes \cite{BKP}, Lichiardopol \cite{Lich} showed that the $k$-dimensional toroidal mesh with $n^k$ vertices (where $n \ge 3$) admits a complete rotation whose fixed-point set is not a vertex-cut. In the same paper, he also disproved a stronger conjecture by the same group \cite{BKP} which asserts that the fixed-point set of any complete rotation of a Cayley graph is not a vertex-cut.  

B. \textsc{Balanced regular Cayley maps.}~~It was observed in \cite{LP} that a complete rotation in a Cayley graph is a special skew-morphism \cite{JS}, and that the existence of a complete rotation in a Cayley graph $\Ga$ is equivalent to the existence of a 2-cell embedding of $\Ga$ on a closed orientable surface as a balanced regular Cayley map \cite{SS}. Regular Cayley maps themselves are important objects of study in topological graph theory. In particular, from \cite{JS, SS} it is evident that constructions and/or characterizations of balanced regular Cayley maps are of considerable interest to the field of regular maps. 

C. \textsc{Frobenius graphs and HARTS (hexagonal meshes).}~~In \cite[Theorem 5.3]{Z}, the second author proved that, if there exists a subgroup $H$ of the setwise stabilizer $\Aut(G, S)$ of $S$ in $\Aut(G)$ such that the union of the $H$-orbits on $G \setminus \{1\}$ of length less than $|H|$ is an independent set and not a vertex-cut of $\Ga = \Cay(G, S)$, then the gossiping time of $\Ga$ is given by $\lceil (n-1)/d \rceil$. He also proved \cite[Theorem 5.1]{Z} that the same result holds if there exists a subgroup $H \le \Aut(G)$ that is regular on $S$ and semiregular on $G \setminus \{1\}$. Such graphs are called the first-kind Frobenius graphs, and they were studied in \cite{Sole, FLP} and further investigated in \cite{Z} from a communication point of view. These two results in \cite{Z} generalize Lemma 17 and Corollary 15 in \cite{BKP}, respectively. In \cite[Theorem 5.1]{Z} it was also proved that the first-kind Frobenius graphs admit `perfect' optimal gossiping algorithms in some sense. Moreover, by \cite{Sole, FLP} and \cite[Section 6]{Z}, such graphs achieve the smallest possible edge-forwarding \cite{HMS} and arc-forwarding indices \cite{Z} and also support `perfect' routing schemes in a sense. It is thus desirable to classify, for example, the first-kind Frobenius circulants, and this has been achieved in \cite{TZ-1} and \cite{TZ-2} in the cases of valency 4 and 6, respectively. It turns out that these two families of circulants all admit complete rotations; see \cite[Theorem 2]{TZ-1} and \cite[Theorem 2]{TZ-2}, respectively. Moreover, recently we became aware of the fact that a very special subfamily of first-kind Frobenius circulants of valency 6 had physically been used \cite{CSK, DRS} as multiprocessor interconnection networks at the Real-Time Computing Laboratory, The University of Michigan. They are called HARTS ({\em Hexagonal Architecture for Real-Time Systems}) \cite{DRS} or {\em hexagonal mesh interconnection networks} \cite{ABF} in the computer science literature. The fact that the topological structures of HARTS are indeed some special first-kind Frobenius circulants of valency 6 was justified in \cite{TZ-2}; see also \S \ref{subsec:harts} in this paper. In the combinatorial community they were first studied in \cite{YFMA}, and their optimal routing and gossiping algorithms were given in \cite{TZ}. 

We refer the reader to \cite{FZ, Z1} for other recent results on Frobenius graphs, and to \cite{HKRU} for a survey on information dissemination in communication networks.

\subsection{A sumary of main results}
\label{subsec:cont}

From the above it should be clear that it would be interesting to classify all rotational Frobenius circulants and give a methodology for constructing them. We will achieve this in the present paper with the help of number theory; see Theorem \ref{bigcomposite}. In view of (B) above, this can also be viewed as a classification of Frobenius circulants that are 2-cell embeddable on closed oriented surfaces as balanced regular Cayley maps (see Corollary \ref{core:map}). We will also study a family of rotational circulants with $p^e$ vertices, where $p$ is an odd prime and $e \ge 3$ an integer, and give a necessary and sufficient condition for the corresponding fixed-point set to be a non-vertex-cut; see Theorem \ref{thm:q}. On the one hand, from this result we obtain a new family of rotational circulants whose gossiping time is equal to the aforementioned trivial lower bound. On the other hand, this result generalizes Lichiardopol's single counterexample \cite{Lich} to the conjecture of Bermond, Kodate and P\'erennes \cite{BKP} as mentioned in (A), to an infinite family of counterexamples. 

\subsection{Definitions}
\label{subsec:def}

Let $G$ be a group with identity element $1$, and let $V$ be a set. An {\em action} of $G$ on $V$ is a mapping $V \times G \rightarrow V, (v, g) \mapsto v^g$, such that $v^1 = v$ and $(v^g)^h = v^{gh}$ for $v \in V$ and $g, h \in G$. We use $v^G := \{v^g: g \in G\}$ to denote the {\em $G$-orbit} containing $v$ and $G_{v} := \{g \in G: v^g = v\}$ the {\em stabilizer} of $v$ in $G$. $G$ is {\em semiregular} on $V$ if $G_{v} = 1$ is the trivial subgroup of $G$ for all $v \in V$, {\em transitive} on $V$ if $v^G = V$ for some (and hence all) $v \in V$, and {\em regular} on $V$ if it is both transitive and semiregular on $V$. If a group $H$ acts on $G$ such that $(uv)^h = u^h v^h$ for any $u, v \in G$ and $h \in H$, then $H$ is said to act on $G$ as a group. In this case we use $G \rtimes H$ to denote the semidirected product \cite{Dixon-Mortimer} of $G$ by $H$ with respect to the action.

It is well known that a Cayley graph $\Ga = \Cay(G, S)$ has valency $|S|$, and it is connected if and only if $\la S \ra = G$. Since $(x, g) \mapsto xg$, $x, g \in G$, defines a regular action of $G$ on $G$ (as a set) which preserves the adjacency of $\Ga$ (see e.g.~\cite{Biggs}), we may view $G$ as a subgroup of the automorphism group $\Aut(\Ga)$ of $\Ga$. Define $\Aut(G, S) := \{\a \in \Aut(G): S^{\a} = S\}$ to be the {\em setwise stabilizer} of $S$ in $\Aut(G)$ under the natural action of $\Aut(G)$ on $G$, and $\Aut(\Ga)_1$ the stabilizer of the vertex $1$ in $\Aut(\Ga)$. Then $\Aut(G, S) \le \Aut(\Ga)_{1}$ (see e.g.~\cite{Godsil} or \cite[Proposition 16.2]{Biggs}).

A bijection $\omega: G \rightarrow G$ is called \cite{BKP} a \emph{complete rotation} of 
$\Cay(G, S)$ if there exists an ordering of $S = \{s_0, s_1,
\ldots, s_{d-1}\}$ (where $d = |S|$) such that $\omega(1) = 1$ and $\omega(gs_i) = \omega(g)s_{i+1}$ for all $g \in G$ and $i = 0, 1, \ldots, d-1$, with subscripts mod $d$. In particular, $\omega(s_i) = s_{i+1}$ for each $i$ and so $\omega$ permutes the elements
of $S$ cyclically. In \cite[Proposititon 2.2]{HMP} it is shown that a bijection $\omega: G
\rightarrow G$ is a complete rotation of $\Cay(G, S)$ if and only if
$\omega \in \Aut(G, S)$ and for some (and hence all) $s \in S$, $s^{\la \omega
\ra} = \{s, s^{\omega}, s^{\omega^2}, \ldots, s^{\omega^{d-1}}\} = S$, where $\la \omega
\ra$ is the cyclic group generated by $\omega$. An element $g \in G$ is called a {\em fixed point of $\omega$} if $g \ne 1$ and there exists $i \in \{1, \ldots, d-1\}$ such that $g^{\omega^{i}} = g$. A Cayley graph is called {\em rotational} if it admits a complete rotation. 

An {\em arc} of a graph is an ordered pair of adjacent vertices. A graph $\Ga$ is {\em $G$-arc-transitive} if $G \le \Aut(\Ga)$ and $G$ is transitive on the set of arcs of $\Ga$. $\Ga$ is {\em arc-transitive} if it is $\Aut(\Ga)$-arc-transitive. It is easy to see that any rotational Cayley graph is arc-transitive.

Given a generating set $S$ of a group $G$ and a cyclic permutation $\rho$ of $S$, a {\em Cayley map} $M = CM(G, S, \rho)$ \cite{JS, SS} is a 2-cell embedding of the Cayley graph $\Cay(G, S)$ on an orientable surface such that for each vertex $g \in G$, the cyclic permutation of the arcs $(g, sg)$, $s \in S$, induced by a fixed orientation of the surface coincides with $\rho$. $M$ is called {\em balanced} \cite{SS} if $\rho(s^{-1}) = \rho(s)^{-1}$ for every $s \in S$, and {\em regular} if its automorphism group is regular on the set of arcs of $\Cay(G, S)$.  
 
A \emph{Frobenius group} $G$ is a transitive group on
a set $V$ which is not regular on $V$ such that
the only element of $G$ which fixes two points of $V$ is the
identity element of $G$. It is well known (see
e.g.~\cite[p.86]{Dixon-Mortimer}) that a finite Frobenius group
$G$ has a nilpotent normal subgroup $K$, called the
\emph{Frobenius kernel}, which is regular on $V$. Hence $G = K \rtimes H$, where $H$ is the stabilizer of a point of $V$; each such group $H$ is called a \emph{Frobenius complement} of $K$ in $G$. Since $K$ is regular on $V$, we may identify $V$ with $K$ in such a way
that $K$ acts on itself by right multiplication, and we may choose $H$ to be the
stabilizer of $1$ so that $H$ acts on $K$ by
conjugation. Obviously, $H$ is semiregular on $K \setminus \{1\}$. Following \cite[Theorem 1.4]{FLP}, a \emph{$G$-Frobenius graph} is a connected Cayley graph $\Cay(K, S)$ such that, for some $a \in K$ with $\la a^H \ra = K$, (i) $S =  a^H$ if $|H|$ is even or $a$ is an involution, or (ii) $S=a^H \cup (a^{-1})^H$ if $|H|$ is odd and $a$ is not an involution.
Hereinafter $x^H := \{h^{-1}xh: h \in H\}$ is the $H$-orbit containing $x \in K$ under the action of $H$ on $K$ (by conjugation). Since $G$ is a Frobenius group, $H$ may be regarded as a subgroup of $\Aut(K)$. Thus $H \le \Aut(K, S) \le \Aut(\Ga)_1$ and consequently $G \le \Aut(\Ga)$. We call $\Cay(K, S)$ a \emph{first-kind} or {\em second-kind} $G$-Frobenius graph \cite{Z} according as whether $S$ is given by (i) or (ii).
 
We refer the reader to \cite{BM}, \cite{Dixon-Mortimer} and \cite{BW00, IR, NZ} for graph, group and number-theoretic terminology and notation, respectively.

\section{Classification of rotational first-kind Frobenius circulants}
\label{sec:class}

\subsection{Preparations}
\label{subsec:fc}
 
In this subsection we collect several preliminary results that will be used in the proof of our main results. 

Let $\Z_n$ be the additive group of integers mod $n$, where $n \ge 3$ is an integer.  
Let $\Z_n^* = \{[u]: 1 \le u \le n-1, \gcd(n,u) = 1\}$ be the multiplicative group of units of the ring $\Z_n$. Here the residue classes $[u]$ are mod $n$, and we may write $[u]_n$ in place of $[u]$ if there is a danger of confusion. We have $\Aut(\Z_n) \cong \Z_n^*$ and $\Z^*_{n}$ acts on $\Z_{n}$ by the usual multiplication: $[x][u] = [xu]$, $[x] \in \Z_{n}$, $[u] \in \Z^*_{n}$. The semidirect product $\Z_{n} \rtimes \Z^*_{n}$ acts on $\Z_{n}$ such that $[x]^{([y], [u])} = [(x+y)u]$ for $[x], [y] \in \Z_{n}$ and $[u] \in \Z^*_{n}$. Thus, for a subgroup $H$ of $\Z_n^*$, $\Z_n \rtimes H$ inherits from $\Z_{n} \rtimes \Z^*_{n}$ the same action on $\Z_n$. We use $[u]^{-1}$ to denote the inverse element of $[u]$ in $\Z^*_{n}$, and $\la [u] \ra$ to denote the cyclic subgroup of $\Z^*_{n}$ generated by $[u]$. The operation of $\Z_{n} \rtimes \Z^*_{n}$ is defined by $([x_1], [u_1])([x_2], [u_2]) = ([x_1]+[x_2][u_1]^{-1}, [u_1 u_2])$ for $([x_1], [u_1]), ([x_2], [u_2]) \in \Z_{n} \rtimes \Z^*_{n}$. Thus the inverse element of $([x], [u])$ in $\Z_{n} \rtimes \Z^*_{n}$ is $(-[xu], [u]^{-1})$.

The first statement in the following lemma was proved in \cite[Lemma 3.1]{HMP}, and the second one can be verified easily.  
 
\begin{lemma}
\label{formofS}
Let $\G = \Cay(\Z_n,S)$ be a connected circulant graph of valency $d = |S|$. Then $\G$ admits an element of $\Z_n^*$ as a complete rotation if and only if there exist integers $h$ and $s$ both coprime to $n$ such that $S = \{[sh^i] : i = 1, \ldots, d\}$ and $h$ has order $d$ mod $n$. Moreover, in this case we have $\Ga \cong \Cay(\Z_n,S')$, where $S' = \langle [h] \rangle = \{[h^i] : i = 1, \ldots, d\}$, and $[sx] \mapsto [x], [x] \in \Z_n$ defines an isomorphism between the two graphs.
\end{lemma}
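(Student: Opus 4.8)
The plan is to prove both directions of the biconditional, then verify the isomorphism claim.

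For the forward direction, I would suppose $\Ga = \Cay(\Z_n, S)$ admits a complete rotation $\omega$ that happens to be an element of $\Z_n^*$, say $\omega = [h]$ acting by multiplication. By the characterization in \cite[Proposition 2.2]{HMP} recalled in the excerpt, $\omega \in \Aut(\Z_n, S)$ and $s^{\la \omega \ra} = S$ for some (hence every) $s \in S$. Fix any element $[s] \in S$. Since $\omega$ acts as multiplication by $[h]$, the orbit $s^{\la \omega \ra}$ is exactly $\{[s h^i] : i \ge 0\}$, and the condition that this orbit equals all of $S$ (which has $d$ elements) forces $S = \{[sh^i] : i = 1, \ldots, d\}$ with the powers $[h^0], [h^1], \ldots, [h^{d-1}]$ distinct, i.e.\ $[h]$ has order exactly $d$ in $\Z_n^*$, which is to say $h$ has order $d$ modulo $n$. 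That $\omega = [h]$ lies in $\Z_n^* = \Aut(\Z_n)$ already gives $\gcd(n,h) = 1$; and since $[s] \in S \subseteq \Z_n \setminus \{[0]\}$ together with connectivity $\la S \ra = \Z_n$, one checks $\gcd(n,s) = 1$ as well (otherwise every element of $S$, being a multiple of $[s]$ scaled by a unit, would lie in a proper subgroup, contradicting $\la S \ra = \Z_n$). This yields the stated form of $S$.

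For the reverse direction, I would assume $S = \{[sh^i] : i = 1, \ldots, d\}$ with $h, s$ coprime to $n$ and $\ord_n(h) = d$, and exhibit the complete rotation explicitly: let $\omega = [h] \in \Z_n^*$ act by $[x] \mapsto [xh]$. Since $[h] \in \Z_n^* = \Aut(\Z_n)$ it is an automorphism of $\Z_n$, and multiplying $S$ by $[h]$ cyclically permutes its listed elements ($[sh^i] \mapsto [sh^{i+1}]$, with $[sh^{d+1}] = [sh^1]$ because $[h^d] = [1]$), so $\omega \in \Aut(\Z_n, S)$ and $s^{\la \omega \ra} = S$. By the same \cite[Proposition 2.2]{HMP} criterion this makes $\omega$ a complete rotation. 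One should also confirm $S = S^{-1}$ and $[0] \notin S$ so that $\Cay(\Z_n,S)$ is a genuine Cayley graph, and note that $\la S \ra = \Z_n$ follows from $\gcd(n,s)=1$ and $[s] \in \la S \ra$; these are routine.

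For the second (moreover) statement, the natural map is $\phi : \Cay(\Z_n, S) \to \Cay(\Z_n, S')$ with $S' = \la [h] \ra$, defined by $\phi([sx]) = [x]$, equivalently $\phi([y]) = [y][s]^{-1} = [y s^{-1}]$; this is well-defined and bijective because $\gcd(n,s) = 1$ means $[s]^{-1}$ exists in $\Z_n^*$, so $\phi$ is just multiplication by the unit $[s]^{-1}$. To see it is a graph isomorphism, I would check that adjacency is preserved: vertices $[y_1], [y_2]$ are adjacent in $\Cay(\Z_n, S)$ iff $[y_1] - [y_2] \in S = [s] S'$, iff $[y_1 s^{-1}] - [y_2 s^{-1}] \in S'$, iff $\phi([y_1])$ and $\phi([y_2])$ are adjacent in $\Cay(\Z_n, S')$. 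This is essentially the standard fact that multiplication by a unit sends $\Cay(\Z_n, S)$ to $\Cay(\Z_n, [s]^{-1}S)$, and here $[s]^{-1}S = S'$ by construction.

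I do not expect any single step to be a serious obstacle, since the hard analytic content is already packaged into the cited \cite[Proposition 2.2]{HMP} characterization of complete rotations. The only point requiring a little care is the forward direction's derivation of $\gcd(n,s) = 1$ from connectivity: one must argue that if $\gcd(n,s) = g > 1$ then, because $[h]$ is a unit, every element $[sh^i]$ of $S$ is divisible by $g$, so $\la S \ra \subseteq g\Z_n \ne \Z_n$, contradicting connectivity. Keeping the roles of "multiplication by a unit is an automorphism" versus "the cyclic structure of the orbit" cleanly separated is the main thing to get right.
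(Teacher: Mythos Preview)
Your proposal is correct. The paper does not actually give a proof: it cites \cite[Lemma 3.1]{HMP} for the biconditional and says the isomorphism claim ``can be verified easily,'' so your argument---unpacking the biconditional via the \cite[Proposition 2.2]{HMP} characterization and then checking that multiplication by $[s]^{-1}$ is a graph isomorphism---is exactly the routine verification the paper omits. One small cleanup: in your reverse direction you propose to ``confirm $S = S^{-1}$, $[0] \notin S$, and $\la S \ra = \Z_n$,'' but these are already part of the standing hypothesis that $\Ga = \Cay(\Z_n, S)$ is a connected circulant graph, so nothing needs to be checked there.
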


Thus a circulant graph of $n$ vertices admits $[h] \in \Z_n^*$ as a complete rotation if and only if it is isomorphic to $\Cay(\Z_n, \langle [h] \rangle)$ with $\langle [h] \rangle$ closed under inverse elements. 

It is easy to see \cite[Section 2]{Z} that a connected Cayley graph $\Cay(G, S)$ is a first-kind $G \rtimes H$-Frobenius graph if and only if $H \le \Aut(G)$ is regular on $S$ and semiregular on $G \setminus \{1\}$. When $G = \Z_n$, this yields: 

\begin{lemma}
\label{fundamentalcirculant}  
Let $\G = \Cay(\Z_n,S)$ be a connected circulant graph.  
\begin{itemize}
\item[\rm (a)] If there exists a subgroup $H$ of $\Z_n^*$ that is regular on $S$ and semiregular on $\Z_n \setminus \{[0]\}$, then $\Z_n \rtimes H$ is a Frobenius group and $\G$ is a first-kind $\Z_n \rtimes H$-Frobenius graph. 
\item[\rm (b)] Conversely, if $\G$ is a first-kind $\Z_n \rtimes H$-Frobenius graph, where $H$ is a subgroup of $\Z_n^*$ such that $\Z_n \rtimes H$ is a Frobenius group, then $H$ is regular on $S$ and semiregular on $\Z_n \setminus \{[0]\}$. 
\end{itemize}
\end{lemma}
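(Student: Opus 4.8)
The plan is to obtain both parts by specialising, to the case $G=\Z_n$, the general equivalence quoted immediately before the lemma: a connected Cayley graph $\Cay(G,S)$ is a first-kind $G\rtimes H$-Frobenius graph exactly when $H\le\Aut(G)$ is regular on $S$ and semiregular on $G\setminus\{1\}$. Under the identification $\Aut(\Z_n)\cong\Z_n^*$ the identity $1$ becomes $[0]$ and $H$ acts by the multiplication $[x]\mapsto[xu]$, so the only genuine work is to bridge the group-theoretic statement ``$\Z_n\rtimes H$ is a Frobenius group'' and the combinatorial statement ``$H$ is semiregular on $\Z_n\setminus\{[0]\}$''.

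First I would read off the action concretely. Under $[x]^{([y],[u])}=[(x+y)u]$ the subgroup $\{([y],[1]):[y]\in\Z_n\}$ realises the translations $[x]\mapsto[x+y]$ and is therefore regular on $\Z_n$, while the stabiliser of $[0]$ is precisely $\{([0],[u]):[u]\in H\}$, which I identify with $H$. Hence $\Z_n\rtimes H$ is always transitive with point stabiliser $H$, and it is non-regular iff $H\ne1$. A nonidentity element $([0],[u])\in H$ fixes a second point $[x]\ne[0]$ iff $[xu]=[x]$; by the standard description of Frobenius groups possessing a regular normal subgroup, $\Z_n\rtimes H$ is then Frobenius with kernel $\Z_n$ and complement $H$ if and only if no nonidentity element of $H$ fixes any nonzero element, i.e. iff $H$ is semiregular on $\Z_n\setminus\{[0]\}$. (Connectedness together with $S=S^{-1}$, $[0]\notin S$ and $n\ge3$ forces $|S|\ge2$, so $|H|\ge2$ whenever $H$ is regular on $S$, making the non-regularity clause automatic.)

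For part (a) I would then argue as follows. Given $H$ regular on $S$ and semiregular on $\Z_n\setminus\{[0]\}$, the previous paragraph shows $\Z_n\rtimes H$ is Frobenius with the nilpotent regular kernel $\Z_n$. Regularity of $H$ on $S$ gives $S=a^H$ for any fixed $a\in S$, and connectedness gives $\langle a^H\rangle=\Z_n$. It remains to verify the parity condition (i) in the definition of a first-kind Frobenius graph. Since $S=S^{-1}$ we have $a^{-1}\in a^H$, so $a^{h_0}=a^{-1}$ for some $h_0\in H$; then $a^{h_0^2}=(a^{-1})^{h_0}=a$ with $a\ne[0]$, and semiregularity forces $h_0^2=1$. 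Thus either $h_0=1$, whence $a=a^{-1}$ is an involution, or $h_0$ has order $2$, whence $2\mid|H|$. Either way $\Ga$ is a first-kind $\Z_n\rtimes H$-Frobenius graph.

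For part (b), conversely, if $\Ga$ is a first-kind $\Z_n\rtimes H$-Frobenius graph then by definition $S=a^H$ for some $a$ with $\langle a^H\rangle=\Z_n$; since $\Z_n\rtimes H$ is assumed to be Frobenius, the bridging step shows $H$ is semiregular on $\Z_n\setminus\{[0]\}$, and in particular its action on the single orbit $S=a^H\subseteq\Z_n\setminus\{[0]\}$ is both free and transitive, i.e. $H$ is regular on $S$. I expect the only real obstacle to be the bookkeeping in the bridging step — correctly extracting the point stabilisers of $\Z_n\rtimes H$ from the semidirect-product action and matching the Frobenius axioms to semiregularity — while the short parity check in (a) is the one place where anything beyond pure specialisation is required.
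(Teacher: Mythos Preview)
Your proposal is correct and follows the same approach as the paper: both specialise the general equivalence (quoted just before the lemma, with reference to \cite[Section 2]{Z}) that $\Cay(G,S)$ is a first-kind $G\rtimes H$-Frobenius graph iff $H\le\Aut(G)$ is regular on $S$ and semiregular on $G\setminus\{1\}$. The paper simply cites this and states the specialisation without further argument, whereas you supply the details of the bridging step and the parity check, all of which are correct.
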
	

It can happen that a circulant $\Ga$ is isomorphic to some $K \rtimes H$-Frobenius graph $\Cay(K, S)$, but $K$ is not a cyclic group and $H$ is not regular on $S$. For example, if $n = p^e$ is a prime power with $e \geq 2$, then $G = \Z_p^e \rtimes \Z_{n-1}^* \cong \AGL(1, n)$ is a Frobenius group and the complete graph $K_n = \Cay(\Z_p^e, \Z_p^e \setminus \{0\})$ is a $G$-Frobenius graph. This describes $K_n$ as a Cayley graph on a non-cyclic group. Note that $K_n$ is not a $\Z_n \rtimes H$-Frobenius graph for any $H \le \Z_n^*$, because $\Z_n^*$ cannot have any subgroup regular on $\Z_n \setminus \{[0]\}$. 
    
In the sequel we focus on the first-kind Frobenius circulants such that the underlying Frobenius group $\Z_n \rtimes H$ has kernel $\Z_n$. Slightly abusing terminology, we say that such a circulant has {\em kernel} $\Z_n$ and {\em complement} $H$.  

\begin{lemma}\label{evenandodd}
Let $\G$ be a first-kind Frobenius circulant with kernel $\Z_n$. Then $n$ is odd and $\G$ has even valency.
\end{lemma}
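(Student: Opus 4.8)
The plan is to reduce both assertions to a single fact: that the unit $[-1]$ of $\Z_n$ lies in $H$. First I would apply Lemma~\ref{fundamentalcirculant}(b) to restate the hypothesis as: $H\le\Z_n^*$ is regular on $S$ and semiregular on $\Z_n\setminus\{[0]\}$. Regularity of $H$ on $S$ immediately gives $\deg\G=|S|=|H|$, so ``even degree'' is the same as ``$|H|$ is even''. The strategy is then to locate $[-1]$ inside $H$ and squeeze everything out of it, using semiregularity for the parity of $n$ and Lagrange's theorem for the parity of $|H|$.

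The heart of the argument is to show $[-1]\in H$, and I expect this to be the only delicate step. I would begin by observing that $S\subseteq\Z_n^*$: writing $S=a^H=\{[au]:[u]\in H\}$, every element of $S$ has the same greatest common divisor with $n$ as $a$ does, so connectedness $\la S\ra=\Z_n$ forces $\gcd(a,n)=1$ and hence each $s\in S$ is a unit. Now since $\G$ is an undirected Cayley graph, $S=-S$. Fixing any $s\in S$, we have $-s\in S$, so by regularity of $H$ on $S$ there is a unique $[u]\in H$ with $[su]=-[s]$, that is $[s(u+1)]=[0]$; cancelling the unit $[s]$ yields $[u]=[-1]$, so $[-1]\in H$. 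The subtlety here is precisely that one must marry the ``external'' symmetry $S=-S$ (coming from the graph being undirected) with the ``internal'' regularity of $H$ on $S$, and the cancellation only goes through after one has checked $S\subseteq\Z_n^*$.

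With $[-1]\in H$ in hand, both conclusions are immediate. For the parity of $n$: under the multiplication action $[-1]$ fixes $[x]$ exactly when $[2x]=[0]$, and semiregularity of $H$ on $\Z_n\setminus\{[0]\}$ forbids any nonzero fixed point; since $[n/2]$ would be such a fixed point when $n$ is even, $n$ must be odd. For the degree: because $n\ge 3$ we have $[-1]\ne[1]$, so $[-1]$ is an element of order $2$ in $H$, whence $2$ divides $|H|=|S|=\deg\G$ and the degree is even. As a consistency check, this also shows that a first-kind Frobenius circulant is always of the ``$|H|$ even'' type and never the exceptional ``$a$ an involution'' type, the latter being anyway impossible once $n$ is odd.
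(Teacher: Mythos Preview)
Your proof is correct, but it proceeds by a genuinely different route from the paper's.

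The paper argues by contradiction through the definition of a first-kind Frobenius graph: if $|S|=|H|$ were odd, then by clause~(i) of the definition $S=[x]^H$ for an involution $[x]$, so every element of $S$ would be an involution; since $\Z_n$ has at most one involution this forces $|S|=1$, contradicting connectedness. Having established that $|S|$ is even, the paper then uses the semiregularity fact $|H|\mid n-1$ to conclude that $n$ is odd.

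Your argument instead pins down the explicit element $[-1]\in H$ by combining $S=-S$ with the regularity of $H$ on $S$ and the observation $S\subseteq\Z_n^*$, and then extracts both conclusions from that single element: semiregularity rules out the fixed point $[n/2]$ (so $n$ is odd), and Lagrange gives $2\mid|H|$. This is slightly longer but more structural: it never invokes the ``$|H|$ even or $a$ an involution'' dichotomy, and it actually exhibits the order-$2$ element of $H$ rather than inferring its existence indirectly. The paper's version, on the other hand, is quicker and leans only on the orbit-counting consequence $|H|\mid n-1$ of semiregularity rather than on the fixed-point interpretation. Both are perfectly sound.
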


\begin{proof}
Let $\G = \Cay(\Z_n,S)$ be a first-kind $\Z_n \rtimes H$-Frobenius circulant. Then $H$ is regular on $S$ and semiregular on $\Z_n \setminus\{[0]\}$ by Lemma \ref{fundamentalcirculant}. Hence $|H| = |S|$ and $|H|$ divides $n-1$. 

Suppose the valency $|S|$ of $\Ga$ is odd. Then $|H|$ is odd and thus by the definition of a first-kind Frobenius graph, $S = [x]^H$ for an involution $[x] \in \Z_n$. Since $[x]$ is an involution, so is every element of $S$. Since $\Z_n$ contains at most one involution, it follows that $|S| = 1$ and so $\Ga$ is a disconnected graph. This contradiction shows that $|S|$ must be even, which implies that $n$ must be odd as $|S|$ is a divisor of $n-1$.
\end{proof}
 
\begin{lemma} 
\label{semiregular}
(\cite[Lemma 4]{TZ-1})
A subgroup $H$ of $\Z_n^*$ is semiregular on $\Z_n \setminus \{[0]\}$ if and only if $[h-1] \in \Z_n^*$ for every $[h] \in H \setminus \{[1]\}$.
\end{lemma}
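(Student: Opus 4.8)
The plan is to translate semiregularity into a statement about when a single non-identity element of $H$ can fix a nonzero point, and then to reduce that to an elementary divisibility condition. Recall that $H \le \Z_n^*$ acts on $\Z_n$ by multiplication, so for $[x] \in \Z_n$ and $[h] \in H$ the element $[h]$ fixes $[x]$ precisely when $[x][h] = [x]$, that is, when $[x(h-1)] = [0]$, equivalently $n \mid x(h-1)$. Thus $H$ is semiregular on $\Z_n \setminus \{[0]\}$ if and only if, for every $[h] \in H \setminus \{[1]\}$, the congruence $x(h-1) \equiv 0 \pmod n$ has no solution with $[x] \ne [0]$.

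First I would isolate the pivotal number-theoretic fact: for a fixed $[h] \ne [1]$, the congruence $x(h-1) \equiv 0 \pmod n$ admits a solution $[x] \ne [0]$ if and only if $\gcd(h-1, n) > 1$, i.e.\ $[h-1] \notin \Z_n^*$. Indeed, if $\gcd(h-1,n) = 1$, then $[h-1]$ is a unit, so $[x(h-1)] = [0]$ forces $[x] = [0]$ and no nonzero solution exists. Conversely, if $g := \gcd(h-1, n) > 1$, then $x = n/g$ satisfies $0 < x < n$, so $[x] \ne [0]$, while $g \mid (h-1)$ guarantees $n \mid (n/g)(h-1)$, hence $[x(h-1)] = [0]$; thus $[h]$ fixes the nonzero point $[x]$.

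With this fact in hand, both directions fall out by contraposition. For the \emph{if} direction, suppose some $[h] \in H \setminus \{[1]\}$ has $[h-1] \notin \Z_n^*$; the construction above produces a nonzero $[x]$ fixed by $[h]$, so $H$ is not semiregular. For the \emph{only if} direction, suppose $H$ fails to be semiregular, so some $[h] \ne [1]$ fixes a nonzero $[x]$, whence $n \mid x(h-1)$ with $[x] \ne [0]$; were $\gcd(h-1,n) = 1$ this would force $[x] = [0]$, so instead $\gcd(h-1,n) > 1$ and $[h-1] \notin \Z_n^*$.

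I expect no serious obstacle here; the only point requiring care is the verification that the candidate solution $x = n/g$ is genuinely nonzero modulo $n$ (which uses $g > 1$) and genuinely a solution (which uses $g \mid (h-1)$). The entire argument is a clean unwinding of the definition of stabilizer together with the standard fact that multiplication by $[h-1]$ is injective on $\Z_n$ exactly when $[h-1]$ is a unit.
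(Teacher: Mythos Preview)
Your argument is correct. Note that the paper does not actually supply a proof of this lemma; it merely cites it as \cite[Lemma 4]{TZ-1}, so there is no in-paper proof to compare against. Your proof is the standard elementary one: unwind the definition of a stabilizer for the multiplicative action, then use that multiplication by $[h-1]$ on $\Z_n$ is injective precisely when $\gcd(h-1,n)=1$.
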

 
The following is a refinement of Lemma \ref{semiregular} in the case when $H$ is cyclic. Given coprime integers $m$ and $a$, we say that {\em $a$ has order $k$ mod $m$} if $a \pmod m$ has order $k$ in $\Z_m^*$. 

%Semiregularity and order
\begin{lemma}\label{order}
Let $H = \langle [h] \rangle$ be a cyclic subgroup of $\Z_n^*$ with order $|H| = d$ (where $1 \le h \le n-1$). Then $H$ is semiregular on $\Z_n \setminus \{[0]\}$ if and only if for every prime factor $p$ of $n$, $h$ has order $d$ mod $p$. 
\end{lemma}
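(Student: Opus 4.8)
The plan is to reduce the statement to Lemma \ref{semiregular} and then localize the resulting coprimality condition at each prime divisor of $n$, where it becomes a statement about multiplicative orders. Since $H=\la [h]\ra$ has order $d$, its nonidentity elements are exactly $[h^j]$ for $j=1,\ldots,d-1$, and these are pairwise distinct. Thus Lemma \ref{semiregular} tells us that $H$ is semiregular on $\Z_n\setminus\{[0]\}$ if and only if $[h^j-1]\in\Z_n^*$, i.e.\ $\gcd(h^j-1,n)=1$, for every $j\in\{1,\ldots,d-1\}$. My first step is therefore to rewrite this as: no prime factor $p$ of $n$ divides $h^j-1$ for any $j\in\{1,\ldots,d-1\}$. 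This is just the observation that a simultaneous family of gcd conditions with $n$ holds precisely when it holds after passing to each prime $p\mid n$.

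Next I would fix a prime factor $p$ of $n$ and analyze the local condition. Since $[h]\in\Z_n^*$ we have $\gcd(h,n)=1$, hence $\gcd(h,p)=1$, so the order $k:=\ord_p h$ of $h$ modulo $p$ is well defined. Because $[h]$ has order $d$ in $\Z_n^*$ we have $h^d\equiv 1\pmod n$, and reducing modulo $p$ gives $h^d\equiv 1\pmod p$, so $k\mid d$; in particular $k\le d$. The key elementary fact is that $p\mid h^j-1$ if and only if $k\mid j$. Consequently $p$ divides $h^j-1$ for some $j\in\{1,\ldots,d-1\}$ if and only if there is a multiple of $k$ in $\{1,\ldots,d-1\}$, which happens exactly when $k\le d-1$, i.e.\ when $k<d$. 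Combined with $k\mid d$ (so $k\le d$), this says: $p$ divides no $h^j-1$ with $1\le j\le d-1$ if and only if $k=d$, that is, $h$ has order $d$ modulo $p$.

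Assembling the two steps finishes the proof: $H$ is semiregular if and only if for every prime $p\mid n$ and every $j\in\{1,\ldots,d-1\}$ we have $p\nmid h^j-1$, which by the local analysis is equivalent to $\ord_p h = d$ for every prime $p\mid n$. I expect no genuine obstacle here; the argument is entirely elementary once Lemma \ref{semiregular} is in hand. The only points requiring care are the routine bookkeeping of the equivalences in both directions, the verification that $\gcd(h,p)=1$ so that $\ord_p h$ is defined, and the clean characterization $p\mid h^j-1\iff \ord_p h\mid j$, which together with $\ord_p h\mid d$ converts the ``order $d$'' hypothesis into the required non-vanishing of all the relevant $\gcd$'s.
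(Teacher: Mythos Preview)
Your proposal is correct and follows essentially the same approach as the paper: reduce to Lemma~\ref{semiregular}, localize the coprimality condition $\gcd(h^j-1,n)=1$ at each prime $p\mid n$, and use $h^d\equiv 1\pmod p$ together with $h^j\not\equiv 1\pmod p$ for $1\le j\le d-1$ to pin down $\ord_p h=d$. Your use of the divisibility characterization $p\mid h^j-1\iff \ord_p h\mid j$ is a slightly cleaner way to phrase the local step than the paper's, but the content is identical.
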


\begin{proof}
Suppose that $H$ is semiregular on $\Z_n \setminus\{[0]\}$. Then by Lemma \ref{semiregular} we have $\gcd(h^i-1,n) = 1$ for $i = 1, \ldots, d-1$. Consequently, for $i = 1, \ldots, d-1$ and each prime factor $p$ of $n$, we have $\gcd(h^i-1,p) = 1$ and so $h$ does not have order $i$ mod $p$. On the other hand, since $H$ is cyclic of order $d$, we have $h^{d} \equiv 1 \pmod n$ and so $h^{d} \equiv 1 \pmod p$. Therefore, for each prime factor $p$ of $n$, $h$ has order $d$ mod $p$.

Conversely, suppose that $h$ has order $d$ mod $p$ for every prime factor $p$ of $n$. Then $h^i - 1 \not \equiv 0 \pmod p$ and so $\gcd(h^i-1,p) = 1$ for $i = 1, \ldots, d-1$. Since this holds for every prime factor $p$ of $n$, we have $\gcd(h^i-1,n) = 1$ for $i = 1, \ldots, d-1$. This together with Lemma \ref{semiregular} implies that $H$ is semiregular on $\Z_n \setminus\{[0]\}$.
\end{proof}

A subset $S$ of a finite group $G$ is called \cite{MKP01} a \emph{$CI$-subset} of $G$ if for every subset $S'$ of $G$, $\Cay(G,S)$ and $\Cay(G,S')$ are isomorphic if and only if $S' = S^\sigma$ for some $\sigma \in \Aut(G)$. It was proved by Muzychuk, Klin and P\"{o}schel \cite[Theorem 5.2]{MKP01}, and Dobson and Morris \cite{DM} independently, that every $S \subset \Z_n^*$ is a $CI$-subset of $\Z_n$.  

\begin{lemma}\label{CIsubset}
Let $\G = \Cay(\Z_n,S)$ and $\G' = \Cay(\Z_n,S')$ be rotational first-kind Frobenius circulants such that the underlying Frobenius groups have kernel $\Z_n$ and complements $H, H' \le \Z_n^*$, respectively. Then $\G$ and $\G'$ are not isomorphic unless $H = H'$. 
\end{lemma}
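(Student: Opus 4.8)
The plan is to reduce the graph isomorphism $\Ga \cong \Ga'$ to a purely multiplicative statement about cosets in $\Z_n^*$ by invoking the $CI$-property just quoted, and then to finish with elementary coset arithmetic.

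First I would record the shape of the connection sets. Since $\Ga$ is a first-kind Frobenius circulant with kernel $\Z_n$ and complement $H$, Lemma \ref{fundamentalcirculant}(b) tells us that $H \le \Z_n^*$ is regular on $S$; hence $S$ is a single $H$-orbit under multiplication, so $S = [a]H = \{[a][u] : [u] \in H\}$ for any $[a] \in S$. I would also observe that every element of $S$ is a unit: multiplication by a unit preserves the gcd with $n$, so if some $[a]\in S$ had $\gcd(a,n)=e>1$, then every element of $S = [a]H$ would be divisible by $e$, forcing $\la S\ra \subseteq \la [e]\ra \subsetneq \Z_n$ and contradicting connectedness. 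Thus $S$ is a genuine coset of $H$ in $\Z_n^*$, and likewise $S' = [a']H'$ is a coset of $H'$ in $\Z_n^*$, with $|H| = |S| = |S'| = |H'|$.

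Next I would apply the $CI$-property. Because $S, S' \subseteq \Z_n \setminus \{[0]\}$, the theorem of Muzychuk, Klin and P\"oschel says each is a $CI$-subset, so $\Ga \cong \Ga'$ forces $S' = S^{\sigma}$ for some $\sigma \in \Aut(\Z_n) = \Z_n^*$, say $\sigma = [m]$; concretely this reads $S' = S[m]$. Substituting the coset descriptions and using commutativity of $\Z_n^*$ gives $[a']H' = S' = ([a]H)[m] = ([a][m])H$.

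Finally I would cancel. The element $[a][m]$ lies in $([a][m])H$, hence in $[a']H'$, so the coset $[a']H'$ equals $([a][m])H'$; comparing with the displayed equality yields $([a][m])H' = ([a][m])H$, and multiplying through by $([a][m])^{-1}$ gives $H = H'$. The only step requiring any care is the verification that $S$ lies inside $\Z_n^*$ and is therefore a coset (the unit property); once the $CI$-property has reduced an arbitrary isomorphism to multiplication by a single scalar $[m]$, the remainder is coset bookkeeping. I note that the rotational hypothesis is not actually needed for this argument, which uses only the first-kind Frobenius structure, although it is the setting in which the lemma will be applied.
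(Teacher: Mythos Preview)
Your proof is correct and follows the same core strategy as the paper: invoke the Muzychuk--Klin--P\"oschel $CI$-result to reduce $\Ga\cong\Ga'$ to $S' = [m]S$ for some $[m]\in\Z_n^*$, then finish with coset arithmetic in $\Z_n^*$. The one difference is that the paper uses the rotational hypothesis, via Lemma~\ref{formofS}, to assume without loss of generality that $[1]\in S$ and $[1]\in S'$, so that $S=H$ and $S'=H'$ as sets; the equality $[m]H=H'$ then forces $[m]\in H$ and hence $H=H'$. You instead work with the general cosets $S=[a]H$ and $S'=[a']H'$, supplying the extra observation (from connectedness) that $[a]$ is a unit so that the cancellation step is legitimate. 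Your version is therefore slightly more general---as you correctly note, it does not use the rotational hypothesis at all---while the paper's normalization makes the endgame marginally shorter.
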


\begin{proof} 
By Lemma \ref{formofS}, we may assume $[1] \in S$ without loss of generality. Since by Lemma \ref{fundamentalcirculant} $H$ is regular on $S$, we have $S = H$ (as sets). Similarly, we may assume $[1] \in S'$ so that $S' = H'$.   

Assume $\G \cong \G'$. Then $\sigma S = S'$ for some $\sigma \in \Z_n^*$ by the aforementioned result of \cite{MKP01}. Since $S = H$ and $S' = H'$, we have $\sigma H = H'$. Since $[1] \in H'$, there exists $[h] \in H$ such that $\sigma [h] = [1]$, that is, $[h]$ is the inverse of $\s$ in $\Z_n^*$. Since $[h]$ is in $H$, so is $\s$, and hence $H' = \sigma H = H$.  
\end{proof}

\subsection{Classification}
\label{subsec:class}

The main result in this section is Theorem \ref{bigcomposite} below, which both classifies rotational first-kind Frobenius circulants with kernel $\Z_n$, and describes a means to construct them. In the proof of this result we will use the fact that for any odd prime $p$ and any integer $e \ge 1$, a primitive root mod $p^e$ is also a primitive root mod $p$. (In fact, for a primitive root $\eta$ mod $p^e$ and every $a \in \{1, \ldots, p-1\}$, there exists an integer $m$ such that $\eta^m \equiv a \pmod {p^e}$. Hence $\eta^m \equiv a \pmod p$ and $\eta$ generates $\Z_p^*$.) Denote by $\varphi$ Euler's totient function. 

\begin{theorem}
\label{bigcomposite}
Let $n = p_1^{e_1}\ldots p_l^{e_l}$ be a positive integer in canonical prime factorization. Let $D = \gcd(p_1 -1, \ldots, p_l-1)$. There exists a rotational first-kind Frobenius circulant with kernel $\Z_n$ and valency $d$ if and only if $n$ is odd and $d$ is an even divisor of $D$. 
 		
Moreover, if $n$ is odd and $d$ is an even divisor of $D$, then there are precisely $\varphi(d)^{l-1}$ pairwise non-isomorphic rotational first-kind Frobenius circulants of valency $d$ with kernel $\Z_n$. Each of these circulants is isomorphic to $\Cay(\Z_n, \langle [h] \rangle)$ for some $[h] \in \Z_n^*$ of the form 
	\begin{equation}\label{CRT1}
		h = \sum_{i=1}^l \frac{n}{p_i^{e_i}}b_i h_i,
	\end{equation} 
where $b_i$ and $h_i$ are integers satisfying
\begin{equation}
\label{CRT3}
b_i (n/p_i^{e_i}) \equiv 1 \pmod {p_i^{e_i}} 
\end{equation}
and	
	\begin{equation}\label{CRT2}	 
		 h_i \equiv \eta_i^{m_i \varphi(p_i^{e_i})/d} \pmod {p_i^{e_i}}
	\end{equation} 
for a fixed primitive root $\eta_i$ mod $p_i^{e_i}$ and an integer $m_i$ coprime to $d$. Furthermore, each such circulant is a first-kind $\Z_n \rtimes \Z_{d}$-Frobenius graph that admits $[h]$ above as a complete rotation.  
\end{theorem}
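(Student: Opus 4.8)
The plan is to translate the property ``rotational first-kind Frobenius circulant with kernel $\Z_n$'' entirely into arithmetic conditions on a cyclic subgroup of $\Z_n^*$, and then to resolve those conditions through the Chinese Remainder decomposition $\Z_n^*\cong\prod_{i=1}^l\Z_{p_i^{e_i}}^*$. First I would note that any complete rotation of a circulant lies in $\Aut(\Z_n)\cong\Z_n^*$, so Lemma \ref{formofS} lets me assume $S=\langle[h]\rangle$ with $d=\ord_n([h])$, and Lemma \ref{fundamentalcirculant} then identifies such a graph as a first-kind $\Z_n\rtimes H$-Frobenius graph with kernel $\Z_n$ and complement $H=\langle[h]\rangle$ precisely when $H$ is semiregular on $\Z_n\setminus\{[0]\}$ (regularity on $S=H$ being automatic, as a group acts regularly on itself by multiplication). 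Two constraints remain: the Cayley condition $S=-S$ demands $[-1]\in\langle[h]\rangle$, and since $[-1]$ has order $2$ this forces $2\mid d$; and by Lemma \ref{order}, semiregularity is equivalent to $h$ having order $d$ modulo every prime factor $p_i$ of $n$. The whole problem thus becomes the counting of cyclic subgroups $H\le\Z_n^*$ of order $d$ that contain $[-1]$ and whose generators have order $d$ modulo each $p_i$.

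For the necessity of the existence criterion, Lemma \ref{evenandodd} gives $n$ odd (and $d$ even), while $d\mid D$ is immediate, since order $d$ modulo $p_i$ forces $d\mid p_i-1$ for every $i$, hence $d\mid\gcd_i(p_i-1)=D$. For sufficiency together with the explicit form, I would assume $n$ odd and $d$ an even divisor of $D$ and construct $h$ coordinatewise by \eqref{CRT2}: for a primitive root $\eta_i$ of $\Z_{p_i^{e_i}}^*$ and $\gcd(m_i,d)=1$, a short $\gcd$ computation shows $h_i=\eta_i^{m_i\varphi(p_i^{e_i})/d}$ has order exactly $d$ in $\Z_{p_i^{e_i}}^*$. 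The key arithmetic point is that $d\mid p_i-1$ makes $d$ coprime to $p_i$, so $\langle h_i\rangle$ meets the kernel (of order $p_i^{e_i-1}$) of the reduction $\Z_{p_i^{e_i}}^*\to\Z_{p_i}^*$ trivially; consequently $h_i$ has order $d$ modulo $p_i$ as well, which is exactly semiregularity. Moreover $h_i^{d/2}$ is then the unique involution of $\Z_{p_i^{e_i}}^*$, namely $-1$, so $h^{d/2}\equiv-1\pmod{p_i^{e_i}}$ for all $i$ and hence $[-1]\in\langle[h]\rangle$; and $\ord_n([h])=\mathrm{lcm}_i\,\ord_{p_i^{e_i}}(h_i)=d$. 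The reconstruction \eqref{CRT1} and \eqref{CRT3} assembles these $h_i$ into one $h$, and Lemmas \ref{formofS} and \ref{fundamentalcirculant} certify that $\Cay(\Z_n,\langle[h]\rangle)$ is a first-kind $\Z_n\rtimes\Z_d$-Frobenius graph admitting $[h]$ as a complete rotation.

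To count, I would first establish the converse lifting fact: if $H=\langle[h]\rangle$ has order $d$ and is semiregular, then $d=\ord_{p_i}(h)\mid\ord_{p_i^{e_i}}(h_i)\mid\ord_n([h])=d$, so each $h_i$ has order exactly $d$ and $\langle h_i\rangle$ is the unique order-$d$ subgroup of the cyclic group $\Z_{p_i^{e_i}}^*$. Hence the admissible complements $H$ are exactly those generated by tuples $(h_1,\dots,h_l)$ with every $h_i$ of order $d$; for such tuples $[-1]\in H$ holds automatically, since $[h]^{d/2}=(h_1^{d/2},\dots,h_l^{d/2})=(-1,\dots,-1)=[-1]$. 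Each $\Z_{p_i^{e_i}}^*$ contains exactly $\varphi(d)$ elements of order $d$, giving $\varphi(d)^l$ such tuples, and each order-$d$ subgroup is generated by precisely $\varphi(d)$ of them (its generators $[h]^k$ with $\gcd(k,d)=1$, all of the same coordinatewise form). This produces $\varphi(d)^l/\varphi(d)=\varphi(d)^{l-1}$ distinct complements $H$, and Lemma \ref{CIsubset} guarantees both that distinct complements yield non-isomorphic graphs and that every rotational first-kind Frobenius circulant arises in this way, delivering the count $\varphi(d)^{l-1}$.

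I expect the principal obstacle to be the order-lifting step --- showing that order $d$ modulo $p_i^{e_i}$ and order $d$ modulo $p_i$ coincide under the hypothesis $d\mid p_i-1$, and marrying this with the $\mathrm{lcm}$ constraint so that semiregularity, the exact order $d$, and the explicit exponent in \eqref{CRT2} all align --- together with the bookkeeping needed to convert the $\varphi(d)$-to-one correspondence between generating tuples and subgroups into the clean figure $\varphi(d)^{l-1}$, while verifying throughout that the requirement $[-1]\in H$ never removes any of these subgroups.
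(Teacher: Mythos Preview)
Your proposal is correct and follows essentially the same approach as the paper: reduce the problem to arithmetic conditions on a cyclic subgroup $H=\langle[h]\rangle\le\Z_n^*$ via Lemmas~\ref{formofS}--\ref{order}, construct $h$ coordinatewise through the Chinese Remainder Theorem, verify $[-1]\in H$ and semiregularity, and then count by dividing the $\varphi(d)^l$ admissible generator tuples by the $\varphi(d)$ generators of each subgroup, invoking Lemma~\ref{CIsubset} for non-isomorphism. Your order-lifting step (arguing that $\langle h_i\rangle$ meets the $p_i$-power kernel of $\Z_{p_i^{e_i}}^*\to\Z_{p_i}^*$ trivially because $\gcd(d,p_i)=1$) and your enumeration are slightly more streamlined than the paper's explicit primitive-root computations, but the overall architecture is identical.
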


\begin{proof}
\textsc{Construction:} Let $n$ be odd, and $d$ an even divisor of $D$. We prove that the circulants described in the final portion of the theorem are rotational first-kind Frobenius circulants with kernel $\Z_n$ and valency $d$. For each $i \in \{1, \ldots, l\}$, let $\eta_i$ be a primitive root mod $p_i^{e_i}$, that is, $\eta_i$ is a generator of the cyclic group $\Z_{p_i^{e_i}}^*$. Since $d$ is a divisor of $p_i -1$, it is also a divisor of $|\Z_{p_i^{e_i}}^*| = \varphi(p_i^{e_i}) = p_i^{e_i-1}(p_i-1)$. Thus $\eta_i^{\varphi(p_i^{e_i})/d}$ generates the unique subgroup of $\Z_{p_i^{e_i}}^*$ with order $d$. Consequently, all generators of this subgroup have the form
 \[
 h_i = \eta_i^{m_i\varphi(p_i^{e_i})/d}
 \]
for an integer $m_i$ coprime to $d$.
It is easily seen that $h_i$ has order $d$ mod $p_i^{e_i}$. Therefore, $h_i^{d} \equiv 1 \pmod {p_i^{e_i}}$.

Set 
\[h = \sum_{i=1}^l \frac{n}{p_i^{e_i}}b_i h_i,\]
where the integer $b_i$ satisfies $b_i (n/p_i^{e_i}) \equiv 1 \pmod {p_i^{e_i}}$ (that is, $b_i$ is the inverse element of $n/p_i^{e_i}$ in $\Z_{p_i^{e_i}}^*$) and $h_i$ is as above. As detailed in \cite[Section 2.5]{NZ}, the Chinese Remainder Theorem implies that $h$ satisfies 
$h^{d} \equiv 1 \pmod n$. Moreover, for each $i \in \{1, \ldots, l\}$, $h$ satisfies $h \equiv h_i \pmod {p_i^{e_i}}$. Thus $h$ has order $d$ mod $p_i^{e_i}$, and therefore $h$ has order $d$ mod $n$. 

We now use $h$ above to construct a rotational circulant graph. Define $H = \langle [h] \rangle$. Then $H$ is a cyclic subgroup of $\Z_n^*$ of order $d$. Let $S = H$ (as sets).  We claim that $S$ is a connection set of $\Z_n$ and hence defines a circulant. To justify this it suffices to show $[-1] \in S$ (note that $[0] \not\in S$). In fact, since $d$ is even, we can write $d = 2k$, where $k$ is a positive integer. The cyclic group $\Z_{p_i^{e_i}}^*$ contains a unique involution, namely, $\eta_i^{\varphi(p_i^{e_i})/2} \equiv -1 \pmod {p_i^{e_i}}$. Note that $m_i$ must be odd because it is coprime to the even integer $d$. Since $k= d/2$, we have $h^k \equiv h_i^k \equiv \eta_i^{m_i \varphi(p_i^{e_i})/2} \equiv -1 \pmod {p_i^{e_i}}$, and consequently $h^k \equiv -1 \pmod n$. Since $[h] \in S$, it follows that $[-1] \in S$ and so $S = -S$. Therefore, $\G = \Cay(\Z_n, S)$ is a well-defined circulant graph of valency $|S| = d$. It is evident that $H$ is regular on $S$ and $\G$ admits $[h]$ as a complete rotation.
 
Next we show that $H$ is semiregular on $\Z_n \setminus{[0]}$. As noted above,  for each $i \in \{1, \ldots, l\}$, $h$ satisfies $h \equiv h_i \pmod {p_i^{e_i}}$ and has order $d$ mod $p_i^{e_i}$. Note that $h_i \equiv \eta_i ^{m_i \varphi(p_i^{e_i})/d} \equiv \eta_i^{m_i p_i^{e_i-1}(p_i-1)/d} \pmod {p_i}$. Note also that $\eta_i$ is a primitive root mod $p_i$ because it is a primitive root mod $p_i^{e_i}$. Thus $\eta_i$ has order $p_i-1$ mod $p_i$ and consequently $\eta_i^{p_i^{e_i-1}} \equiv \eta_i \pmod {p_i}$. Therefore, $h_i \equiv \eta_i^{m_i (p_i-1)/d} \pmod {p_i}$. Since $d$ is a divisor of $p_i-1$, $\eta_i^{(p_i-1)/d}$ has order $d$ mod $p_i$; in other words, $\eta_i^{(p_i-1)/d}$ generates the unique subgroup of $\Z_{p_i}^*$ of order $d$. Since $\gcd(m_i,d) = 1$, we deduce that $\eta_i^{m_i (p_i-1)/d}$ also has order $d$ mod $p_i$. Since $h_i \equiv \eta_i^{m_i (p_i-1)/d} \pmod {p_i}$, this means that $h_i$ has order $d$ mod $p_i$. Therefore, $h$ ($\equiv h_i \pmod {p_i}$) has order $d$ mod $p_i$ for each  $i \in \{1, \ldots, l\}$. It then follows from Lemma \ref{order} that $H$ is semiregular on $\Z_n \setminus \{[0]\}$. Finally, since $H \cong \Z_{d}$, we conclude that $\G$ is a first-kind $\Z_n \rtimes \Z_{d}$-Frobenius graph of order $d$ that admits $[h]$ as a complete rotation. 

\medskip
\textsc{Completeness of construction:} We have already seen in Lemma \ref{evenandodd} that no first-kind Frobenius circulants with kernel $\Z_n$ exist when $n$ is even. 

We now show that, up to isomorphism, every rotational first-kind Frobenius circulant with kernel $\Z_n$ arises from the construction above. Let $\G = \Cay(\Z_n,S)$ be such a circulant with valency $|S| = d$, where $n$ is odd. Then by Lemma \ref{fundamentalcirculant} there exists a subgroup $H$ of $\Z_n^*$ that is regular on $S$ and semiregular on $\Z_n \setminus \{[0]\}$ such that $\Ga$ is a $\Z_n \rtimes H$-Frobenius graph. Moreover, as we saw in the proof of Lemma \ref{CIsubset}, we may assume without loss of generality that $S = H = \langle [h] \rangle$ (as sets), where $[h]$ is a complete rotation of $\Ga$. Thus $h$ has order $d$ mod $n$, and $d$ must be even by Lemma \ref{evenandodd}. Since $H$ is semiregular on $\Z_n \setminus \{[0]\}$, by Lemma \ref{order},  for each $i \in \{1, \ldots, l\}$, $h$ has order $d$ mod $p_i$ and consequently $d$ divides $|\Z_{p_i}^*| = p_i -1$. Since this holds for every $i$, it follows that $d$ divides $D$. Moreover, $h$ also has order $d$ mod $p_i^{e_i}$ for each $i \in \{1, \ldots, l\}$. This is because the order of $h$ mod $p_i^{e_i}$ is bounded above by the order of $h$ mod $n$, and below by the order of $h$ mod $p_i$, but both bounds are equal to $d$. Since the unique subgroup of $\Z_{p_i^{e_i}}^*$ with order $d$ is generated by $\eta_i^{\varphi(p_i^{e_i})/d} \pmod {p_i^{e_i}}$, where $\eta_i$ is a primitive root mod $p_i^{e_i}$, without loss of generality we may assume $h \equiv \eta_i^{m_i\varphi(p_i^{e_i})/d}  \pmod {p_i^{e_i}}$, where $m_i$ is an integer coprime to $d$. As shown in \cite[Section 2.5]{NZ}, by the Chinese Remainder Theorem, $h$ has expression (\ref{CRT1}) with $b_i$ and $h_i$ as given in (\ref{CRT3}) and (\ref{CRT2}), respectively.

\medskip
\textsc{Enumeration:} We now enumerate all possible $h$ in (\ref{CRT1}) and determine when the corresponding first-kind Frobenius circulants are isomorphic. Since each $\Z_{p_i^{e_i}}^*$ has exactly one subgroup of order $d$, without loss of generality we may fix the primitive root $\eta_i$ in (\ref{CRT2}). There are exactly $\varphi(d)$ values $h_i$ as in (\ref{CRT2}), and each of them corresponds to a different value of $m_i$. Since this is true for each $i \in \{1, \ldots, l\}$ and since each $b_i \pmod {p_i^{e_i}}$ is unique, we conclude that there are exactly $\varphi(d)^l$ different possibilities for $h$. 

We now demonstrate that different choices of $h$ can generate the same group $H$ in $\Z_n^*$. Since $H$ is cyclic of order $d$, it has exactly $\varphi(d)$ generators. We claim that each of them corresponds to a different $h$ in (\ref{CRT1}). Let $h_0$ be a fixed solution to (\ref{CRT1}) corresponding to positive integers $m_i$ coprime to $d$. Then by (\ref{CRT1}) and (\ref{CRT2}), we have $h_0 \equiv \eta_i^{m_i \varphi(p_i^{e_i})/d}  \pmod {p_i^{e_i}}$ for each $i \in \{1, \ldots, l\}$. Let $m_j$ be a positive integer coprime to $d$. Then $h_0^{m_j} \equiv \eta_i^{m_i m_j \varphi(p_i^{e_i})/d}  \pmod {p_i^{e_i}}$. Since $m_i m_j$ is coprime to $d$, a possible value for $h_i$ in (\ref{CRT2}) is $h_i \equiv \eta_i^{m_i m_j \varphi(p_i^{e_i})/d} \pmod {p_i^{e_i}}$. Therefore, $h = h_0^{m_j}$ is also a solution to (\ref{CRT1}). However, $h_0$ and $h_0^{m_j}$ generate the same subgroup $H$ of $\Z_n^*$, and therefore the same first-kind $\Z_n \rtimes H$-Frobenius circulant $\G$. (Recall that the connection set for $\G$ is defined by $S = H$ in our construction.) In other words, if $\G$ is a rotational first-kind $\Z_n \rtimes H$-Frobenius circulant of order $d$, then $H$ has $\varphi(d)$ generators, and each of them is a different solution $h$ to (\ref{CRT1}). Since there are $\varphi(d)^l$ possible values for $h$, we deduce that there are at most $\varphi(d)^{l-1}$ rotational first-kind Frobenius circulants of valency $d$ with kernel $\Z_n$. However, by Lemma \ref{CIsubset}, different subgroups $H$ give rise to non-isomorphic circulants.  
Therefore, there are exactly $\varphi(d)^{l-1}$ rotational first-kind Frobenius circulants of valency $d$ with kernel $\Z_n$. 
\end{proof}

The valency $d$ of $\Cay(\Z_n, \langle [h] \rangle)$ in Theorem \ref{bigcomposite} is strictly less than the smallest prime divisor of $n$. In particular, if $3$ is a divisor of $n$, then $d = 2$ and so the only rotational first-kind Frobenius circulant with kernel $\Z_n$ is the cycle of length $n$. 

We illustrate Theorem \ref{bigcomposite} by the following example.

\begin{example}
\label{ex:6253}
{\em Let $n = 6253 = 13^2 \times 37$, so that $p_1 = 13$, $p_2 = 37$, and $D = \gcd(12,36) = 12$. Choose $\eta_1 = \eta_2 = 2$, which is a primitive root mod $13$ as well as a primitive root mod $37$. By (\ref{CRT3}), we have $b_1 \equiv 37^{-1} \equiv 32 \pmod {13^2}$ and $b_2 \equiv (13^2)^{-1} \equiv 30 \pmod {37}$.   

The even divisors of $D$ are $d = 2,4,6$ and $12$, and respectively, they produce $\varphi(d) = 1, 2, 2$ and $4$ non-isomorphic rotational first-kind Frobenius circulants $\Cay(\Z_{6253}, S)$ with kernel $\Z_{6253}$. These circulants are listed in Table \ref{tab6253}, omitting the pairs $(m_1, m_2)$ that produce a circulant isomorphic to one already in the table.

\begin{table}
\begin{center}
  \begin{tabular}{l|l|l|l|l}
\hline 
$d$ & $(m_1, m_2)$  & $(h_1, h_2)$ & $h$ & $S = H= \langle [h] \rangle$ \\  \hline
2     &    &    & $-[1]$ & $\{\pm [1]\}$ \\  \hline
4     & (1, 1) & (99, 31) & $-[746]$ & $\{\pm [1], \pm [746]\}$ \\  \hline
4     & (1, 3) & (99, 6) & $-[2436]$ & $\{\pm [1], \pm [2436]\}$ \\  \hline
6     & (1, 1) & (147, 27) & $-[1712]$ & $\{\pm [1], \pm [1712], \pm [1713]\}$ \\  \hline
6     & (5, 5) & (147, 11) & $-[1543]$ & $\{\pm [1], \pm [1543], \pm [1544]\}$ \\  \hline
12     & (1, 1) & (80, 8) & $-[2286]$ & $\{\pm [1], \pm [746], \pm [1540], \pm [1712], \pm [1713], \pm [2286]\}$ \\  \hline
12     & (1, 5) & (80, 23) & $-[1272]$ & $\{ \pm[1], \pm [526], \pm [746], \pm [1272], \pm [1543], \pm [1544]\}$ \\  \hline
12     & (1, 7) & (80, 29) & $-[2117]$ & $\{ \pm [1], \pm [319], \pm [1712], \pm [1713], \pm [2117], \pm [2436]\}$ \\  \hline
12     & (1, 11) & (80, 14) & $+[3122]$ & $\{ \pm [1], \pm [695], \pm [1543], \pm [1544], \pm [2436], \pm [3122]\}$ \\  \hline
\end{tabular}
  \caption{All rotational first-kind Frobenius circulants with kernel $\Z_{6253}$.}
\label{tab6253}
  \end{center}
\end{table}
}
\end{example}
 
In view of (B) in \S \ref{subsec:motiv}, Theorem \ref{bigcomposite} can be restated as follows. 

\begin{corollary}
\label{core:map}
Let $n = p_1^{e_1}\ldots p_l^{e_l}$ be a positive integer in canonical prime factorization. Let $D = \gcd(p_1 -1, \ldots, p_l-1)$. A first-kind Frobenius circulant $\Ga$ of valency $d$ with kernel $\Z_n$ can be embedded on a closed orientable surface as a balanced regular Cayley map if and only if (i) $n$ is odd and $d$ is an even divisor of $D$, and (ii) $\Ga$ is isomorphic to a circulant constructed in Theorem \ref{bigcomposite}. 
\end{corollary}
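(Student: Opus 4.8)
The plan is to obtain Corollary \ref{core:map} as a direct reformulation of Theorem \ref{bigcomposite} through the equivalence recorded in item (B) of \S\ref{subsec:motiv}. By \cite{LP, SS}, a connected Cayley graph admits a complete rotation---that is, it is rotational---if and only if it has a 2-cell embedding on a closed orientable surface as a balanced regular Cayley map. Hence the hypothesis ``$\Ga$ can be embedded as a balanced regular Cayley map'' is interchangeable with ``$\Ga$ is rotational'', and the whole task reduces to matching rotationality of a first-kind Frobenius circulant with kernel $\Z_n$ against conditions (i) and (ii).

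For the forward direction, I would assume $\Ga$ embeds as a balanced regular Cayley map, so by the equivalence $\Ga$ is a rotational first-kind Frobenius circulant with kernel $\Z_n$ and degree $d$. Its mere existence, via the ``only if'' part of the first assertion of Theorem \ref{bigcomposite}, forces $n$ to be odd and $d$ to be an even divisor of $D$, which is (i); and the ``Completeness of construction'' step in that proof shows $\Ga$ to be isomorphic to one of the graphs $\Cay(\Z_n, \langle [h]\rangle)$ built there, which is (ii).

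For the reverse direction, I would assume (i) and (ii). By (ii), $\Ga$ is isomorphic to a circulant $\Cay(\Z_n, \langle [h]\rangle)$ constructed in Theorem \ref{bigcomposite}, whose connection set is a cyclic subgroup of $\Z_n^*$ closed under inverse elements (the construction places $[-1]$ in it). The remark following Lemma \ref{formofS} then shows that any circulant isomorphic to $\Cay(\Z_n, \langle [h]\rangle)$ admits $[h]$ as a complete rotation, so $\Ga$ is rotational; the equivalence of (B) converts this back into the desired embedding.

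The one point I would treat with care is the interchange in the reverse direction: a complete rotation is defined relative to a fixed Cayley presentation, so ``rotational'' is not visibly an isomorphism invariant. What rescues the argument is precisely the remark after Lemma \ref{formofS}, which states that admitting some element of $\Z_n^*$ as a complete rotation is equivalent to being isomorphic to $\Cay(\Z_n, \langle [h]\rangle)$; this packages the needed invariance together with the explicit isomorphism $[sx]\mapsto[x]$, so no computation beyond the excerpt is required. Everything else is bookkeeping, and I expect the proof to be short.
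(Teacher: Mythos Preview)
Your proposal is correct and matches the paper's approach exactly: the paper offers no proof beyond the single sentence ``In view of (B) in \S\ref{subsec:motiv}, Theorem \ref{bigcomposite} can be restated as follows,'' so your argument simply unpacks that restatement via the rotational/balanced-regular-Cayley-map equivalence. Your additional care about transferring rotationality across the isomorphism in condition (ii), handled through the remark after Lemma \ref{formofS}, is a worthwhile clarification that the paper leaves implicit.
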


\begin{corollary}
\label{core:pp}
Let $p$ be an odd prime, and let $e \ge 1$ and $d \ge 2$ be integers. There exists a rotational first-kind Frobenius circulant on $p^e$ vertices with valency $d$ and kernel $\Z_{p^e}$ if and only if $d$ is an even divisor of $p-1$. Moreover, up to isomorphism such a circulant is unique for every even divisor $d$ of $p-1$,  namely $\Cay(\Z_{p^e}, \langle [\eta^{p^{e-1}(p-1)/d}] \rangle)$, where $\eta$ is a primitive root mod $p^e$. 
\end{corollary}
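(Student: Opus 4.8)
The plan is to obtain Corollary \ref{core:pp} as the special case $l = 1$ of Theorem \ref{bigcomposite}, taking $n = p^e$, $p_1 = p$, and $e_1 = e$. First I would note that here $D = \gcd(p_1 - 1, \ldots, p_l - 1)$ reduces to $D = p - 1$, and that $n = p^e$ is automatically odd because $p$ is an odd prime. The existence assertion is then immediate from Theorem \ref{bigcomposite}: a rotational first-kind Frobenius circulant with kernel $\Z_{p^e}$ and degree $d$ exists if and only if $d$ is an even divisor of $p - 1$. Since the smallest even divisor of any integer is $2$, the hypothesis $d \ge 2$ is automatically satisfied and need not be imposed separately.

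Next, for the uniqueness statement I would apply the enumeration part of Theorem \ref{bigcomposite}, which counts $\varphi(d)^{l-1}$ pairwise non-isomorphic such circulants. With $l = 1$ this is $\varphi(d)^{0} = 1$, so for each admissible degree $d$ there is, up to isomorphism, exactly one rotational first-kind Frobenius circulant with kernel $\Z_{p^e}$.

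It then remains to read off the explicit connection set by specialising the formulas (\ref{CRT1}), (\ref{CRT3}) and (\ref{CRT2}) to $l = 1$. Because $n/p_1^{e_1} = p^e/p^e = 1$, condition (\ref{CRT3}) forces $b_1 \equiv 1 \pmod{p^e}$, and the single-term sum (\ref{CRT1}) collapses to $h \equiv h_1 \pmod{p^e}$. By (\ref{CRT2}) we have $h_1 \equiv \eta^{m_1 \varphi(p^e)/d} \pmod{p^e}$ for a primitive root $\eta$ modulo $p^e$ and some $m_1$ coprime to $d$; using $\varphi(p^e) = p^{e-1}(p-1)$ and invoking the uniqueness just established (so that the isomorphism type does not depend on $m_1$), I may set $m_1 = 1$. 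This yields $h \equiv \eta^{p^{e-1}(p-1)/d} \pmod{p^e}$, so that every such circulant is isomorphic to $\Cay(\Z_{p^e}, \langle [\eta^{p^{e-1}(p-1)/d}] \rangle)$, as required.

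Since every component is furnished by Theorem \ref{bigcomposite}, I do not expect a genuine obstacle. The only point needing care is confirming that the Chinese Remainder expression (\ref{CRT1}) degenerates correctly when $l = 1$, namely that the product $b_1 h_1$ reduces to a single congruence modulo $p^e$, and that $\eta^{\varphi(p^e)/d}$ really has order $d$ modulo $p^e$, which is precisely the divisibility computation already performed in the construction part of the theorem.
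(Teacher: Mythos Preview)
Your proposal is correct and follows essentially the same approach as the paper: specialise Theorem \ref{bigcomposite} to $l=1$, observe $D=p-1$ and $b_1=1$, use $\varphi(d)^{l-1}=1$ for uniqueness, and take $m_1=1$ in (\ref{CRT2}) to obtain the explicit generator $\eta^{p^{e-1}(p-1)/d}$. Your write-up is more detailed than the paper's two-line version, but the argument is the same.
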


\begin{proof}
Using the notation in Theorem \ref{bigcomposite}, we have $l = 1$, $D = p-1$, and $b_1 = 1$ in (\ref{CRT3}). The result follows from Theorem \ref{bigcomposite} by noting $\varphi(d)^{l-1} = 1$ for every even divisor $d$ of $p-1$, and by choosing $m_1 = 1$ in (\ref{CRT2}). 
\end{proof}

In the special case when $e=1$, Corollary \ref{core:pp} gives the following: there exists a rotational first-kind Frobenius circulant on $p$ vertices with valency $d$ and kernel $\Z_{p}$ if and only if $d$ is an even divisor of $p-1$; in this case such a circulant is unique and is isomorphic to $\Cay(\Z_{p}, \langle [\eta^{(p-1)/d}] \rangle)$, where $\eta$ is a primitive root mod $p$. It can be verified that this graph is in fact the unique first-kind Frobenius circulant on $p$ vertices with valency $d$ and kernel $\Z_{p}$. The family of such circulants $\Cay(\Z_{p}, \langle [\eta^{(p-1)/d}] \rangle)$, with $d$ running over all even divisors of $p-1$, is precisely the family of all arc-transitive graphs on $p$ vertices as classified in \cite{Chao}. Thus we have: 

\begin{corollary}
\label{core:prime order}
All arc-transitive graphs of prime order are rotational first-kind Frobenius circulants.
\end{corollary}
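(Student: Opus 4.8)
The plan is to derive the corollary directly from Chao's classification of arc-transitive graphs of prime order \cite{Chao} together with the $e=1$ specialization of Corollary \ref{core:pp}. The essential content is the bridge already announced in the paragraph preceding the statement, so the proof will simply make that bridge explicit and combine it with the existence-and-uniqueness result.

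First I would recall Chao's theorem: up to isomorphism, every connected arc-transitive graph on a prime number $p$ of vertices is a circulant $\Cay(\Z_p, H)$ whose connection set $H$ is a subgroup of the multiplicative group $\Z_p^*$, regarded as a subset of $\Z_p \setminus \{[0]\}$. For $\Cay(\Z_p, H)$ to be an undirected graph one needs $H = -H$, i.e. $[-1] \in H$; since $\Z_p^*$ is cyclic of order $p-1$ with $[-1]$ its unique involution, this holds precisely when $d := |H|$ is even. Hence the connected arc-transitive graphs on $p$ vertices with at least one edge are exactly the $\Cay(\Z_p, H)$ with $H$ the unique subgroup of $\Z_p^*$ of even order $d$, where $d$ ranges over the even divisors of $p-1$.

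Next I would make $H$ explicit. Fixing a primitive root $\eta$ modulo $p$, so that $\Z_p^* = \langle [\eta] \rangle$, the unique subgroup of order $d$ is $\langle [\eta^{(p-1)/d}] \rangle$. Thus each connected arc-transitive graph on $p$ vertices is isomorphic to $\Cay(\Z_p, \langle [\eta^{(p-1)/d}] \rangle)$ for some even divisor $d$ of $p-1$. Invoking the $e=1$ case of Corollary \ref{core:pp}, which asserts that for every even divisor $d$ of $p-1$ this graph is the (unique) rotational first-kind Frobenius circulant on $p$ vertices of degree $d$ with kernel $\Z_p$, the corollary follows at once.

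I do not expect a deep obstacle; the only care needed is the bookkeeping of boundary cases and the translation of Chao's formulation into the generator-based description used here. At one extreme the complete graph $K_p$ arises as the case $d = p-1$, which is even because $p$ is odd and is therefore covered. At the other extreme the edgeless graph is disconnected and hence automatically excluded, since Frobenius graphs are connected; and on $p=2$ vertices the only connected graph $K_2$ has odd degree $1$ and so cannot be a first-kind Frobenius circulant by Lemma \ref{evenandodd}. The statement is thus understood for connected graphs on an odd prime number of vertices. Finally, checking that Chao's formulation, in which the connection set equals a multiplicative subgroup, coincides with the description $\langle [\eta^{(p-1)/d}] \rangle$ is routine because $\Z_p^*$ is cyclic, and no further argument is required.
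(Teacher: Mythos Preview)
Your proposal is correct and follows essentially the same route as the paper: the paper derives the corollary immediately from Chao's classification \cite{Chao} together with the $e=1$ case of Corollary \ref{core:pp}, and you simply spell out that bridge in more detail, including the identification $H=\langle[\eta^{(p-1)/d}]\rangle$ and the handling of the degenerate cases $p=2$ and the edgeless graph. There is no substantive difference in method.
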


It follows from \cite[Theorem 2]{TZ-1} and \cite[Theorem 2]{TZ-2}, respectively, that all first-kind Frobenius circulants of valency 4 or 6 are rotational. It would be interesting to explore if this extends to some other valencies.

\subsection{HARTS, or hexagonal meshes}
\label{subsec:harts}

We now explain that HARTS (Hexagonal Architecture for Real-Time Systems), also known as hexagonal mesh, C-wrapped hexagonal mesh or hexagonal interconnection network, belongs to the family \cite{TZ-2} of first-kind Frobenius circulants with kernel $\Z_n$ and valency $6$, which in turn is a subfamily of the family of rotational circulants as classified in Theorem \ref{bigcomposite}. HARTS was proposed \cite{CSK} as a distributed real-time computing system, and its properties were studied in \cite{CSK, DRS, ABF}.

By \cite[Theorem 2]{TZ-2}, a first-kind Frobenius circulant with kernel $\Z_n$ and valency $6$ exists if and only if every prime factor of $n$ is congruent to $1$ mod $6$, and moreover such circulants are precisely $\Cay(\Z_n, \la [a]_n \ra)$ with $a$ running over all 
solutions to the congruence equation $x^2 - x + 1 \equiv 0 \pmod {n}$. In particular, if $n_k = 3k^2 + 3k + 1$ where $k \ge 2$ is an integer, then $3k+2$ is a solution to $x^2 - x + 1 \equiv 0 \pmod {n_k}$. Hence 
$$
TL_{n_k} := \Cay(\Z_{n_k}, \la [3k+2]_{n_k} \ra) = \Cay(\Z_{n_k}, \{\pm [1]_{n_k}, \pm [3k+1]_{n_k}, \pm [3k+2]_{n_k}\})
$$
is a first-kind Frobenius circulant with valency $6$; see \cite{TZ} and \cite[Example 1]{TZ-2}. In \cite{YFMA}, it was proved that $TL_{n_k}$ has diameter $k$, and among all circulants $\Cay(\Z_n, \{\pm [a]_n, \pm [b]_n, \pm [c]_n\})$ of valency $6$ and diameter $k$ such that $a+b+c \equiv 0 \pmod {n}$, $TL_{n_k}$ has the maximum number of vertices.  

The HARTS $H_k$ of size $k$ has diameter $k-1$ and $n_{k-1} = 3k^2 - 3k + 1$ vertices, and is isomorphic \cite{CSK, ABF} to the circulant $\Cay(\Z_{n_{k-1}}, S)$ with $S = \{\pm [k-1]_{n_{k-1}}, \pm [k]_{n_{k-1}}, \pm [2k-1]_{n_{k-1}}\}$. Clearly, $[3k]_{n_{k-1}} \in \Z^*_{n_{k-1}}$. Thus $\Cay(\Z_{n_{k-1}}, S) \cong \Cay(\Z_{n_{k-1}}, S')$ via the isomorphism $[x]_{n_{k-1}} \mapsto [3k]_{n_{k-1}} [x]_{n_{k-1}}$, where $S' = \{\pm [3k]_{n_{k-1}} [k-1]_{n_{k-1}}, \pm [3k]_{n_{k-1}} [k]_{n_{k-1}}, \pm [3k]_{n_{k-1}} [2k-1]_{n_{k-1}}\} = \{\pm [1]_{n_{k-1}}, \pm [3k-1]_{n_{k-1}}, \pm [3k-2]_{n_{k-1}}\}$. It follows that $H_k$ is isomorphic to $TL_{n_{k-1}}$. 

The same conclusion can also be drawn as follows: $H_k$ is isomorphic \cite{ABF} to the EJ network $EJ_{k + (k-1)\omega}$ where $\omega = (1+\sqrt{3}i)/2$, but any EJ network $EJ_{a+b i}$ with $\gcd(a, b) = 1$ is isomorphic \cite[Theorem 5]{TZ-2} to a first-kind Frobenius circulant of valency 6.

\section{A family of rotational non-Frobenius circulants}
\label{sec:family}

The purpose of this section is to prove the following result. 

\begin{theorem}
\label{thm:q}
Let $q = p^e$ with $p$ an odd prime and $e \ge 3$ an integer. Let
\begin{equation}
\label{eq:pr}
\Ga_{q, r} = \Cay(\Z_q, \la [(p-1)^{p^r}] \ra),
\end{equation}
where $r$ is an integer between $0$ and $e-1$. Then $\Ga_{q,r}$ is a connected rotational circulant of valency $2p^{e-r-1}$ that admits $[(p-1)^{p^r}]$ as a complete rotation. The set of fixed points of $[(p-1)^{p^r}]$ is equal to $F = \{[px] \ne [0]: x \in \Z\}$ and is an independent set of $\Ga_{q, r}$. Moreover, $F$ is a vertex-cut of $\Ga_{q, r}$ if and only if $r \ne 0$.
\end{theorem}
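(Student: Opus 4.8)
The plan is to establish the four assertions of Theorem~\ref{thm:q} in sequence---degree and complete rotation, the fixed-point set, independence, and finally the vertex-cut dichotomy, which is the substantive part. Write $h=(p-1)^{p^r}$ and let $v_p$ denote the $p$-adic valuation. Everything hinges first on the order of $p-1$ modulo $p^e$. Since $(p-1)^2=1+p(p-2)$ with $v_p(p(p-2))=1$, the lifting-the-exponent lemma gives $v_p((p-1)^{2m}-1)=1+v_p(m)$, whereas $(p-1)^i\equiv-1\pmod p$ for odd $i$; hence $p-1$ has order $2p^{e-1}$, and therefore $h$ has order $d=2p^{e-r-1}$ modulo $p^e$, which is the claimed degree $|\la[h]\ra|$. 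As $d$ is even, $h^{d/2}$ is the unique involution $[-1]$ of $\Z_q^*$, so $S=\la[h]\ra$ is inverse-closed, and since $[1]\in S$ generates $\Z_q$ additively, $\Ga_{q,r}$ is connected; Lemma~\ref{formofS} then certifies that $[h]$ is a complete rotation.

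Next I would compute the fixed-point set. An element $[x]$ with $v_p(x)=j$ is fixed by some $[h^i]$, $1\le i\le d-1$, exactly when $v_p(h^i-1)\ge e-j$. The same valuation computation, applied to $h^i=(p-1)^{p^r i}$, gives $v_p(h^i-1)=0$ for odd $i$ and $v_p(h^i-1)=1+r+v_p(i/2)$ for even $i$; as $i$ runs over $1,\dots,d-1$ the largest attainable value is $e-1$, reached at $i=2p^{e-r-2}$. Thus a value $\ge e-j$ is attainable iff $j\ge1$, whence $F=\{[px]\ne[0]\}$. Independence is then immediate, since any difference of two elements of $F$ is divisible by $p$, hence a non-unit, hence not in $S\subseteq\Z_q^*$.

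The decisive step concerns the induced subgraph on $W:=\Z_q\setminus F=\{[0]\}\cup\Z_q^*$, for $F$ is a vertex-cut precisely when this subgraph is disconnected. The crucial observation is that $h$ has order $2$ modulo $p^{r+1}$ (the order computation above with $e$ replaced by $r+1$), so $h\equiv-1\pmod{p^{r+1}}$ and every $s\in S$ satisfies $s\equiv\pm1\pmod{p^{r+1}}$. For $r\ge1$ I would view reduction modulo $p^{r+1}$ as a graph homomorphism $\phi$ from $\Ga_{q,r}[W]$ onto the graph $R$ with vertex set $\{0\}\cup\Z_{p^{r+1}}^*$ in which two residues are adjacent precisely when they differ by $\pm1$: along every edge the residue changes by exactly $\pm1$. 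Deleting from the cycle $\Z_{p^{r+1}}$ the $p^r-1\ (\ge2)$ nonzero multiples of $p$ breaks $R$ into $p^r-1$ components, and because $\Z_q^*\to\Z_{p^{r+1}}^*$ is onto, each component is met by the image of some unit of $W$; two units lying over different components cannot be joined inside $\Ga_{q,r}[W]$, so $W$ is disconnected and $F$ is a vertex-cut.

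For $r=0$, by contrast, $S$ contains both $[1]$ and $[p-1]$. The $[1]$-edges make consecutive integers adjacent, so within $W$ they string each block $A_k=\{kp+1,\dots,(k+1)p-1\}$ of units into a path, while the $[p-1]$-edge $[kp+2]\sim[(k+1)p+1]$ bridges $A_k$ to $A_{k+1}$ and $[0]\sim[1]$ attaches $[0]$; hence $\Ga_{q,0}[W]$ is connected and $F$ is not a vertex-cut, which closes the equivalence. I expect the dichotomy to be the main obstacle: one must identify the correct modulus $p^{r+1}$---reduction modulo $p$ alone does not separate the cases---to force disconnection for $r\ge1$, and exhibit the explicit bridging elements $[1],[p-1]\in S$, available only when $r=0$, to prove connectivity. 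Finally, the valuation bound and the reduction both require a proper modulus $p^{r+1}\mid p^e$ with $r\le e-2$; the extreme value $r=e-1$ collapses $\Ga_{q,r}$ to the $p^e$-cycle with $h=[-1]$ and empty fixed-point set, and so must be recorded separately.
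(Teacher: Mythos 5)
Your proposal is correct, and its backbone --- the observation that every connection--set element is congruent to $\pm 1$ modulo $p^{r+1}$ --- is exactly the invariant the paper isolates in equation (\ref{eq:H1}) of Lemma \ref{le:order}. The differences are in execution. First, you obtain the valuations $v_p((p-1)^{2m}-1)=1+v_p(m)$ in one stroke from the lifting-the-exponent lemma, where the paper (Lemma \ref{le:a}) runs an induction on binomial expansions; your route is shorter and gives the fixed-point set $F$ by a clean ``largest attainable valuation'' argument rather than the two-inclusion proof of Lemma \ref{le:fix}. Second, for $r\ge 1$ the paper tracks the partial sums $a_l=\sum_t(-1)^{i_t}$ along a hypothetical path from $[0]$ to $[p+1]$ and shows $-(p-1)\le a_l\le p-1$ throughout; your reduction-mod-$p^{r+1}$ homomorphism onto the punctured cycle is the same computation phrased as a quotient argument, and it buys you slightly more (disconnection into $p^r-1$ pieces, not just the unreachability of one target vertex) at no extra cost. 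Third, for $r=0$ the paper counts the $(p-1)/2$ orbits of $H_{q,0}$ on $X(h)\setminus\{[0]\}$ and invokes Lemma \ref{le:fx}(d), whereas you exhibit an explicit spanning structure on $\Z_q^*\cup\{[0]\}$ using the generators $[1]$ and $[p-1]$; both work, and yours avoids the orbit bookkeeping entirely. Finally, your closing caveat about $r=e-1$ is well taken and in fact sharper than the paper: there $d=2$, $h\equiv -1$, the genuine fixed-point set of $[h]$ is empty (not $\{[px]\ne[0]\}$), and the paper's own Lemma \ref{le:fix} breaks down since $2p^{e-r-2}$ is not an integer; so the theorem as stated really does need $r\le e-2$ for the fixed-point and vertex-cut assertions, and your flagging of this is a genuine improvement rather than a gap in your argument.
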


The circulants $\Ga_{q, r}$ above cannot be Frobenius, for otherwise the fixed-point set of $[(p-1)^{p^r}]$ would be empty. 
Note that the fixed-point set $F$ of $\Ga_{q, r}$ does not rely on $r$. Note also that $\Ga_{q, e-1}$ is isomorphic to the cycle of $q$ vertices. 

As mentioned in the introduction, it was proved in \cite[Lemma 17]{BKP} that if a Cayley graph has a complete rotation such that the corresponding fixed-point set is an independent set and not a vertex-cut, then its gossiping time can be easily computed. It is known \cite{BKP} that some popular networks, including hypercubes, star graphs and multi-dimensional tori, have this property. Theorem \ref{thm:q} shows that $\{\Ga_{q, 0}: q = p^e,\;\mbox{$p$ an odd prime, $e \ge 3$}\}$ is a new family of rotational Cayley graphs with this property. It was conjectured in \cite{BKP} that the fixed-point set of any complete rotation of a Cayley graph is not a vertex-cut. Lichiardopol \cite{Lich} disproved this by a counterexample. Theorem \ref{thm:q} generalizes his construction to an infinite family of counterexamples, namely $\Ga_{q, r}$ with $1 \le r \le e-1$. In fact, the counterexample in \cite[Section 4]{Lich} is exactly $\Ga_{3^n, 1}$.

An immediate consequence of Theorem \ref{thm:q} and \cite[Lemma 17]{BKP} is that, for any odd prime $p$ and integer $e \ge 3$, the gossiping time of $\Ga_{p^e, 0}$ is equal to $\lceil (p^e - 1)/2p^{e-1}\rceil$, which is quite small compared with the order $p^e$ of $\Ga_{p^e, 0}$.

\subsection{Fixed points of a complete rotation}
\label{subsec:fp}

Before proving Theorem \ref{thm:q} let us briefly discuss basic properties of the fixed-point set of a complete rotation in a rotational Cayley graph.  

Let $\Ga = \Cay(G, S)$ be rotational with a complete rotation $\omega$. Let $H = \langle \omega \rangle \le \Aut(G)$. Then $H$ is regular on $S$ and permutes the elements of $S$ in a cyclic manner. Define \cite[Section 5]{Z} 
$$
X(\omega) = \{x \in G: H_x = 1\} \cup \{1\},
$$
where $H_x$ is the stabilizer of $x$ in $H$. Denote by $F(\omega)$ the set of fixed points of $\omega$. Denote by $\Ga - F(\omega)$ the graph obtained from $\Ga$ by deleting all vertices of $F(\omega)$. 

The first two parts of the following lemma were observed in \cite[Section 5]{Z} under a general setting. We give their proofs for completeness of the present paper.  

\begin{lemma}
\label{le:fx}
With the notation above, we have:
\begin{itemize}
\item[\rm (a)] $F(\omega) = G \setminus X(\omega)$;
\item[\rm (b)] $X(\omega) \setminus \{1\}$ is the union of all $H$-orbits on $G \setminus \{1\}$ with length $|S|$;
\item[\rm (c)] $F(\omega)$ is the union of all other $H$-orbits on $G \setminus \{1\}$, that is, the union of all $H$-orbits whose lengths properly divide $|S|$;
\item[\rm (d)] $F(\omega)$ is not a vertex-cut of $\Ga$ if and only if for every $H$-orbit $x^H$ contained in $X(\omega)$, there exists a path of $\Ga - F(\omega)$ from $1$ to at least one  vertex of $x^H$.
\end{itemize}
\end{lemma}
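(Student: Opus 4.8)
The plan is to unwind the definitions for parts (a)--(c), which amount to orbit counting, and then reduce (d) to a connectivity statement that is settled by exploiting the fact that $H$ acts on $\Ga$ as automorphisms fixing the vertex $1$.

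First, for (a), I would note that $\omega$ has order $d = |S|$, since $H = \langle \omega \rangle$ is regular on $S$ and hence $|H| = |S| = d$; in particular $\omega^i \ne 1$ for $1 \le i \le d-1$. An element $g \ne 1$ lies in $F(\omega)$ exactly when $g^{\omega^i} = g$ for some such $i$, that is, exactly when $H_g \ne 1$. Since $1^\omega = 1$ forces $H_1 = H \ne 1$, the identity has nontrivial stabilizer but is placed in $X(\omega)$ by convention, so $G \setminus X(\omega) = \{g \ne 1 : H_g \ne 1\} = F(\omega)$. For (b), the set $X(\omega) \setminus \{1\} = \{x \ne 1 : H_x = 1\}$ is a union of $H$-orbits (trivial stabilizers occur simultaneously along an orbit, as $H_{x^h} = h^{-1}H_x h$), and by orbit--stabilizer $|x^H| = |H|/|H_x| = |H| = |S|$ precisely when $H_x = 1$; hence $X(\omega) \setminus \{1\}$ is exactly the union of the full-length orbits. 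Part (c) then follows at once: $G \setminus \{1\}$ is partitioned into $H$-orbits, $X(\omega) \setminus \{1\}$ collects those of length $|S|$, so its complement $F(\omega)$ (by (a)) is the union of the remaining orbits, whose lengths are proper divisors of $|S|$.

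The substance is in (d). The key observation is that $H \le \Aut(G, S) \le \Aut(\Ga)_1$, so every $h \in H$ is an automorphism of $\Ga$ fixing the vertex $1$, and moreover $F(\omega)$ and $X(\omega)$ are each $H$-invariant, being unions of $H$-orbits. Consequently $H$ restricts to a group of automorphisms of the subgraph $\Ga - F(\omega)$, whose vertex set is $X(\omega)$ by (a), all of which fix $1$. Since $\Ga$ is connected and $X(\omega) \ne \emptyset$ (indeed $S \subseteq X(\omega)$ because $H$ is regular on $S$), the set $F(\omega)$ fails to be a vertex-cut exactly when $\Ga - F(\omega)$ is connected, i.e. exactly when every vertex of $X(\omega)$ lies in the component of $1$.

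I would then establish the equivalence in (d) by a path-transport argument. If $1$ is joined in $\Ga - F(\omega)$ to some vertex $x_0$ of an orbit $x^H \subseteq X(\omega)$, then applying any $h \in H$ to such a path produces a walk from $1^h = 1$ to $x_0^h$ all of whose vertices stay in $X(\omega)$ (as $h$ preserves $X(\omega)$), hence a path in $\Ga - F(\omega)$; letting $h$ range over $H$ shows that $1$ reaches every vertex of $x^H$. Thus $1$ reaches one vertex of $x^H$ if and only if it reaches all of them, and $\Ga - F(\omega)$ is connected if and only if $1$ reaches at least one vertex in each orbit $x^H \subseteq X(\omega)$, which is the stated criterion. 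I expect the only delicate points to be bookkeeping the trivial orbit $\{1\}$ (for which the condition holds via the empty path) and confirming that ``not a vertex-cut'' is synonymous with ``$\Ga - F(\omega)$ connected'' once $\Ga$ is known to be connected; neither constitutes a genuine obstacle.
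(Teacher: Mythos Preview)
Your proposal is correct and follows essentially the same approach as the paper: parts (a)--(c) are handled by unwinding definitions together with the orbit--stabilizer lemma, and part (d) uses the same path-transport argument via the action of $H \le \Aut(\Ga)_1$ on $\Ga - F(\omega)$. Your write-up is in fact slightly more careful in places (noting $S \subseteq X(\omega)$, treating the trivial orbit $\{1\}$, and spelling out why ``not a vertex-cut'' coincides with connectedness of $\Ga - F(\omega)$), but the underlying argument is the same.
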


\begin{proof}
Denote $d = |S|$.

(a) Let $x \in G$. We have: $x \in F(\omega)$ $\Leftrightarrow$ $x \ne 1$ and there exists $i \in \{1, \ldots, d-1\}$ such that $x^{\omega^{i}} = x$ $\Leftrightarrow$ $x \ne 1$ and there exists $i \in \{1, \ldots, d-1\}$ such that $\omega^{i} \in H_x$ $\Leftrightarrow$ $x \ne 1$ and $H_x \ne 1$ $\Leftrightarrow$ $x \in G \setminus X(\omega)$.

(b) Since $H_{x^{h}} = h^{-1} H_x h$  for any $x \in G \setminus \{1\}$ and $h \in H$, $H_x = 1$ if and only if $H_{x^{h}} = 1$. Thus $X(\omega) \setminus \{1\}$ is invariant under $H$ and hence is the union of some $H$-orbits on $G \setminus \{1\}$. By the orbit-stabilizer lemma, an $H$-orbit on $G \setminus \{1\}$ is contained in this union if and only if it has length $d$. 

(c) It can be verified that $F(\omega)$ is invariant under the action of $H$. (In fact, if $x \in F(\omega)$, then there exists $i \in \{1, \ldots, d-1\}$ such that $x^{\omega^{i}} = x$. Thus, for any integer $j$, we have $(x^{\omega^{j}})^{\omega^{i}} = x^{\omega^{j}}$ and so $x^{\omega^{j}} \in F(\omega)$.) This implies that $F(\omega)$ is the union of some $H$-orbits on $G \setminus \{1\}$. Thus, by (a), $F(\omega)$ is the union of all $H$-orbits on $G \setminus \{1\}$ outside $X(\omega)$. Moreover, by (b) and the orbit-stabilizer lemma, $F(\omega)$ is the union of all $H$-orbits whose lengths properly divide $d$.  

(d) The necessity is obvious. To prove the sufficiency, if a path as stated exists, then there exists a path of $\Ga - F(\omega)$ from $1$ to every vertex of $x^H \subseteq X(\omega)$. This is because $H$ fixes $1$, leaves $F(\omega)$ invariant, is a subgroup of $\Aut(\Ga)$, and is transitive on $x^H$. In other words, there exists a path from $1$ to every vertex of $X(\omega)$. Therefore, $\Ga - F(\omega)$ is connected and so $F(\omega)$ is not a vertex-cut of $\Ga$. 
\end{proof}

\subsection{Proof of Theorem \ref{thm:q}}
\label{subsec:family}

Throughout this subsection we assume that $p, q, e, r$ and $h$ are as in Theorem \ref{thm:q} and residue classes are mod $q$. Define 
\begin{equation}
\label{eq:H}
H_{q,r} = \la [h] \ra,\;\,\mbox{where $h = (p-1)^{p^r}$}.
\end{equation}
Since $\gcd(h,p) = 1$, $H_{q,r}$ is a subgroup of $\Z_q^*$.  

Denote by $\ord_{p}(n)$ the exponent of $p$ in the canonical prime factorisation of $n$. In particular, $\ord_{p}(n) = 0$ if $p$ is not a divisor of $n$. In the special case when $p=3$, equality (\ref{eq:a}) below was observed in \cite[Lemma 4.2]{Lich}. 

\begin{lemma}
\label{le:a}
Let $k$ and $s$ be integers with $k \ge 0$ and $1 \le s \le p-1$. 
\begin{itemize}
\item[\rm (a)] If $s$ is even, then
\begin{equation}
\label{eq:a}
\ord_p((p-1)^{p^k s} - 1) = k+1.
\end{equation}
\item[\rm (b)] If $s$ is odd, then
\begin{equation}
\label{eq:a2}
\ord_p((p-1)^{p^k s} + 1) = k+1.
\end{equation}
\end{itemize}
\end{lemma}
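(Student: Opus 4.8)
The plan is to reduce both parts to a single valuation computation and then evaluate that valuation either by the Lifting-the-Exponent lemma or, self-containedly, by induction on $k$. The key observation is the identity $p-1 = -(1-p)$, which gives $(p-1)^{p^k s} = (-1)^{p^k s}(1-p)^{p^k s} = (-1)^{s}(1-p)^{p^k s}$, the last step because $p^k$ is odd. Writing $m = p^k s$, when $s$ is even this yields $(p-1)^m - 1 = (1-p)^m - 1$, and when $s$ is odd it yields $(p-1)^m + 1 = -\bigl((1-p)^m - 1\bigr)$. Since $\ord_p(-x) = \ord_p(x)$, both parts (a) and (b) become equivalent to the single assertion
\[
\ord_p\bigl((1-p)^{p^k s} - 1\bigr) = k+1.
\]

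To evaluate this I would set $a = 1-p$, so that $a \equiv 1 \pmod p$ and $\ord_p(a-1) = \ord_p(-p) = 1$. The Lifting-the-Exponent lemma for the odd prime $p$ then gives $\ord_p(a^m - 1) = \ord_p(a-1) + \ord_p(m) = 1 + \ord_p(m)$; since $m = p^k s$ with $1 \le s \le p-1$ coprime to $p$, we have $\ord_p(m) = k$, and the claim follows.

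If a self-contained argument is preferred, I would instead expand binomially: $(1-p)^m - 1 = \sum_{j\ge 1}\binom{m}{j}(-p)^j$. The $j=1$ term is $-mp = -p^{k+1}s$, of valuation exactly $k+1$ since $p \nmid s$. For each $j \ge 2$ the identity $j\binom{m}{j} = m\binom{m-1}{j-1}$ gives $\ord_p\bigl(\binom{m}{j}p^j\bigr) \ge \ord_p(m) - \ord_p(j) + j = k + (j - \ord_p(j))$, and the elementary inequality $j - \ord_p(j) \ge 2$ for all $j \ge 2$ (using $p^{\ord_p(j)} \le j$ and $p \ge 3$) shows every such term has valuation at least $k+2$. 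Hence the $j=1$ term dominates and the total valuation is $k+1$. Alternatively one may run an induction on $k$: the base case $k=0$ follows from $(1-p)^m \equiv 1 - mp \pmod{p^2}$, and the inductive step from raising $A = (1-p)^{p^k s} = 1 + p^{k+1}u$ (with $p\nmid u$) to the $p$-th power and checking that $A^p - 1 = p^{k+2}u + (\text{terms of valuation} > k+2)$.

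The main obstacle is the valuation bookkeeping in the non-dominant terms: one must verify that no higher binomial term (in the direct expansion) and no term beyond the linear one (in the inductive $p$-th-power step) can match or beat the valuation $k+1$ of the leading term. Once the inequality $j - \ord_p(j) \ge 2$ is in hand, or equivalently once the oddness of $p \ge 3$ is used to control $\ord_p\binom{p}{j} = 1$ for $2 \le j \le p-1$, the remainder is routine.
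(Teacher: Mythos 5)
Your proof is correct, and it takes a genuinely different route from the paper's. The paper keeps parts (a) and (b) separate and runs an induction on $k$: the base case $k=1$ is done by expanding $(p-1)^{ps}$ binomially, and the inductive step uses the factorizations $a^p-1=(a-1)\sum_{i=0}^{p-1}a^i$ and $a^p+1=(a+1)\sum_{i=0}^{p-1}(-1)^i a^i$ together with the observation that each cofactor has $p$-adic valuation exactly $1$. You instead begin with the sign normalization $(p-1)^{p^k s}=(-1)^s(1-p)^{p^k s}$ (valid since $p^k$ is odd), which collapses both parts into the single claim $\ord_p\bigl((1-p)^{p^k s}-1\bigr)=k+1$, and then dispatch that either by the Lifting-the-Exponent lemma or by a one-shot binomial expansion in which the $j=1$ term has valuation exactly $k+1$ and every $j\ge 2$ term has valuation at least $k+2$ via $\ord_p\binom{m}{j}\ge\ord_p(m)-\ord_p(j)$ and $j-\ord_p(j)\ge 2$. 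Your unification is a real simplification: it removes the need for two parallel arguments and, in the LTE or direct-expansion form, removes the induction on $k$ altogether; the cost is that you must either invoke LTE as a known result or carry out the valuation bookkeeping for the higher binomial terms, which you do correctly. The paper's version is longer but entirely self-contained and uses only the two elementary factorizations. Both arguments are sound; the valuations check out in each.
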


\begin{proof}
Denote $a_{k, s} = (p-1)^{p^k s}$. It is obvious that (\ref{eq:a}) and (\ref{eq:a2}) are true when $k=0$. Assume $k \ge 1$ in the sequel. We have 
\begin{equation}
\label{eq:a1}
a_{1, s} = \sum_{j=0}^{ps} \choose{ps}{j} (-1)^j p^{ps - j}. 
\end{equation}

(a) Suppose that $s$ is even. Then the last term on the right-hand side of (\ref{eq:a1}) is $1$, the second last one is $-p^2 s$, and all other terms are divisible by $p^3$. Since $2 \le s \le p-1$, it follows that $\ord_p(a_{1, s} - 1) = 2$ and so (\ref{eq:a}) holds when $k=1$.  

Assume inductively that $\ord_p(a_{k, s} - 1) = k+1$ for some $k \ge 1$ and every even $s$ with $2 \le s \le p-1$. Since for every $i \ge 0$, $p^k si$ is even (as $s$ is even), we have $a_{k, s}^i = \sum_{j=0}^{p^k si} \choose{p^k si}{j} (-1)^j p^{p^k si - j} \equiv 1 \mod p^2$. Thus $\ord_{p}\left(\sum_{i=0}^{p-1} a_{k, s}^i\right) = 1$. Since $a_{k+1, s} - 1 = (a_{k, s} - 1) \cdot \sum_{i=0}^{p-1} a_{k, s}^i$, it follows that $\ord_p(a_{k+1, s} - 1) = \ord_p(a_{k, s} - 1) + \ord_{p}\left(\sum_{i=0}^{p-1} a_{k, s}^i\right) = k+2$, and the proof is complete by induction.

(b) Suppose that $s$ is odd. As in (a), we see from (\ref{eq:a1}) that $\ord_p(a_{1, s} + 1) = 2$.

Assume inductively that $\ord_p(a_{k, s} + 1) = k+1$ for some $k \ge 1$ and every odd $s$ with $1 \le s \le p-2$. Similar to (a), for every $i \ge 0$, we have $a_{k, s}^i \equiv (-1)^i \mod p^2$ and thus $\ord_{p}\left(\sum_{i=0}^{p-1} (-1)^i a_{k, s}^i\right) = 1$. Since $p$ is odd, $a_{k+1, s} + 1 = a_{k, s}^p + 1 = (a_{k, s} + 1) \cdot \sum_{i=0}^{p-1} (-1)^i a_{k, s}^i$. It follows that $\ord_p(a_{k+1, s} + 1) = \ord_p(a_{k, s} + 1) + \ord_{p}\left(\sum_{i=0}^{p-1} (-1)^i  a_{k, s}^i\right) = k+2$, and the proof is complete.
\end{proof}

\begin{lemma}
\label{le:order}
The following hold:
\begin{itemize}
\item[\rm (a)] the order of $[p-1]$ in the multiplicative group $\Z_{q}^*$ is equal to $2p^{e-1}$;
\item[\rm (b)] $|H_{q,r}| = 2p^{e-r-1}$, and moreover
\bea
- H_{q,r} & = & H_{q,r} \non \\
    & = & \{[h^i]: 0 \le i \le 2p^{e-r-1} - 1\} \label{eq:H2} \\
    & = & \{[p^{r+1} k \pm 1]: 0 \le k \le p^{e-r-1}-1\}. \label{eq:H1}
\eea
\end{itemize}
\end{lemma}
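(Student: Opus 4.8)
The plan is to derive both parts from Lemma \ref{le:a}, establishing (a) first and then bootstrapping (b) from the order computed there; all computations stay inline so no display environments are needed.

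For part (a), I would first note that any exponent $t$ with $(p-1)^t \equiv 1 \pmod q$ must be even, since reducing modulo $p$ gives $(-1)^t \equiv 1 \pmod p$ and $p$ is odd. Next I would show the order divides $2p^{e-1}$: applying \eqref{eq:a} with $k = e-1$ and $s = 2$ (legitimate since $2 \le p-1$ as $p \ge 3$) yields $\ord_p((p-1)^{2p^{e-1}} - 1) = e$, so $(p-1)^{2p^{e-1}} \equiv 1 \pmod{p^e}$. Hence the order is of the form $2^{\varepsilon} p^{j}$ with $\varepsilon \in \{0,1\}$ and $0 \le j \le e-1$, and evenness forces $\varepsilon = 1$. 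To exclude $j < e-1$, I would apply \eqref{eq:a} again with $s = 2$ and $k = j$, obtaining $\ord_p((p-1)^{2p^{j}} - 1) = j+1 \le e-1 < e$, so $(p-1)^{2p^{j}} \not\equiv 1 \pmod{p^e}$. Thus $j = e-1$ and the order is exactly $2p^{e-1}$.

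For part (b), the order of $h = (p-1)^{p^r}$ is $2p^{e-1}/\gcd(2p^{e-1}, p^r) = 2p^{e-r-1}$ (using $\gcd(2,p)=1$ and $r \le e-1$), which gives $|H_{q,r}| = 2p^{e-r-1}$; equation \eqref{eq:H2} is then immediate since $H_{q,r} = \la [h] \ra$ is cyclic. To prove $-H_{q,r} = H_{q,r}$ it suffices to show $-1 \in H_{q,r}$, and since $H_{q,r}$ is cyclic of even order its unique involution is $h^{p^{e-r-1}} = (p-1)^{p^{e-1}}$; applying \eqref{eq:a2} with $k = e-1$ and $s=1$ gives $\ord_p((p-1)^{p^{e-1}}+1) = e$, i.e.\ $(p-1)^{p^{e-1}} \equiv -1 \pmod{p^e}$, so $-1 \in H_{q,r}$.

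Finally, for \eqref{eq:H1} I would apply \eqref{eq:a2} with $k = r$ and $s = 1$ to get $h = (p-1)^{p^r} \equiv -1 \pmod{p^{r+1}}$, whence $h^i \equiv (-1)^i \equiv \pm 1 \pmod{p^{r+1}}$ for every $i$; this shows each element of $H_{q,r}$ has the form $[p^{r+1}k \pm 1]$, giving the inclusion of $H_{q,r}$ in the right-hand set. A short counting argument then closes the gap: the $p^{e-r-1}$ residues $p^{r+1}k+1$ and the $p^{e-r-1}$ residues $p^{r+1}k-1$ $(0 \le k \le p^{e-r-1}-1)$ are pairwise distinct modulo $p^e$, and the two families cannot coincide because $p^{r+1}(k-k') \equiv -2 \pmod{p^e}$ is impossible when $p$ is odd; so the right-hand set has exactly $2p^{e-r-1} = |H_{q,r}|$ elements, forcing equality. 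I expect the only delicate points to be ruling out the proper divisors of $2p^{e-1}$ in part (a) and verifying the non-overlap in this last count; everything else is a direct reading of Lemma \ref{le:a}.
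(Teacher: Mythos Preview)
Your proposal is correct and follows essentially the same route as the paper: both arguments derive everything from Lemma~\ref{le:a}, first pinning down the order of $[p-1]$ via the parity observation and \eqref{eq:a}, then reading off $|H_{q,r}|$, exhibiting $-1$ as a power of $h$ via \eqref{eq:a2}, and finally combining the congruence $h\equiv -1\pmod{p^{r+1}}$ with a count. Your treatment is in fact slightly tidier in two spots --- you bound the order directly by $2p^{e-1}$ rather than by $(p-1)p^{e-1}$, and you locate $-1$ as $h^{p^{e-r-1}}=(p-1)^{p^{e-1}}$ rather than the paper's $(p-1)^{p^{e+r-1}}$ --- but these are cosmetic differences, not a genuinely different approach.
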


\begin{proof}
(a) Let $d$ denote the order of $[p-1]$ in $\Z_{q}^*$. Then $d$ is a divisor of $|\Z_{q}^*| = (p-1)p^{e-1}$. Since $(p-1)^d \equiv 1 \mod q$ by the definition of $d$, we have $(p-1)^d \equiv 1 \mod p$. On the other hand, $p-1 \equiv -1 \mod p$ implies $(p-1)^d \equiv (-1)^d \mod p$. Thus $d$ must be even and so $d = p^k s$ for some integer $k$ with $1 \le k \le e-1$ and even integer $s$ dividing $p-1$. (We have $k \ge 1$ since $(p-1)^s \not \equiv 1 \mod q$ when $s$ is even.) This together with (\ref{eq:a}) implies $k+1 \ge e$. Hence $k=e-1$ and $d = p^{e-1}s$. On the other hand, by (\ref{eq:a}), we have $(p-1)^{2p^{e-1}} \equiv 1 \mod q$. Therefore, $d=2p^{e-1}$.

(b) Since $[p-1]$ has order $2p^{e-1}$ in $\Z_q^*$, the order of $[h]$ in $\Z_{q}^*$ is $2p^{e-r-1}$. That is, $|H_{q,r}| = 2p^{e-r-1}$. Since $r \ge 0$, by (\ref{eq:a2}) we have $(p-1)^{p^{e+r-1}} + 1 \equiv 0 \mod q$. Thus, for every $i$, 
$$
(p-1)^{p^r i} + (p-1)^{p^r (i+p^{e-1})} = (p-1)^{p^r i} \left((p-1)^{p^{e+r-1}} + 1\right) \equiv 0 \mod q.
$$
In other words, $[h^i] = -[h^{i+p^{e-1}}]$, and therefore $-H_{q,r} = H_{q,r}$.

The equality (\ref{eq:H2}) holds because, by (a), $(p-1)^{p^r (j-i)} \not \equiv 1 \mod q$ for $0 \le i < j \le 2p^{e-r-1} - 1$. By (\ref{eq:a2}), we have $(p-1)^{p^r} \equiv -1 \mod p^{r+1}$ and so $(p-1)^{p^r i} \equiv (-1)^i \mod p^{r+1}$. In other words, each $[h^i] = [p^{r+1} k + (-1)^i]$ for some integer $k$. In $\Z_q$, there are exactly $2p^{e-r-1}$ integers of the form $p^{r+1} k \pm 1$, and for distinct $k$ between $0$ and $p^{e-r-1}-1$, these integers are different in $\Z_q$. Therefore, $H_{q,r} = \{[p^{r+1} k \pm 1]: k \in \Z\} = \{[p^{r+1} k \pm 1]: 0 \le k \le p^{e-r-1}-1\}$.
\end{proof}

\begin{lemma}
\label{le:fix}
The set of fixed points of $[h]$ is given by
\begin{equation}
\label{eq:fix}
F = F(h) = \{[px] \ne [0]: x \in \Z\}.
\end{equation}
\end{lemma}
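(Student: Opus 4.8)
The plan is to work directly from the definition of a fixed point. Writing $d = |H_{q,r}| = 2p^{e-r-1}$ for the order of $\omega = [h]$ (equivalently the degree of $\Ga_{q,r}$, by Lemma \ref{le:order}(b)), a nonzero $[x] \in \Z_q$ is a fixed point of $[h]$ if and only if $h^i x \equiv x \pmod{p^e}$ for some $i \in \{1, \ldots, d-1\}$, that is, $(h^i - 1)x \equiv 0 \pmod{p^e}$. Since the whole problem is controlled by divisibility by powers of $p$, I would rephrase this as the $p$-adic valuation condition $\ord_p(h^i - 1) + \ord_p(x) \ge e$, and then show that such an $i$ exists precisely when $p \mid x$ and $[x] \ne [0]$, which is exactly the set $F$ in (\ref{eq:fix}).

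For the inclusion $F(\omega) \subseteq F$ I would dispose of the units first. If $\gcd(x,p) = 1$ then $[x] \in \Z_q^*$, so $(h^i - 1)x \equiv 0 \pmod{p^e}$ forces $h^i \equiv 1 \pmod{p^e}$; but $[h]$ has order $d$ in $\Z_q^*$ by Lemma \ref{le:order}(b), so $h^i \not\equiv 1$ for $1 \le i \le d-1$. Hence no unit is a fixed point, and therefore every fixed point is a nonzero multiple of $p$.

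The substantive direction is $F \subseteq F(\omega)$, and this is where I expect the only real work to be. Given $[px] \ne [0]$ we have $\ord_p(px) \ge 1$, so it suffices to exhibit a single exponent $i_0 \in \{1, \ldots, d-1\}$ with $\ord_p(h^{i_0} - 1) \ge e - 1$; then $\ord_p((h^{i_0}-1)\,px) \ge e$ and $\omega^{i_0}$ fixes $[px]$. I would take $i_0 = 2p^{e-r-2}$, so that $h^{i_0} = (p-1)^{p^r i_0} = (p-1)^{p^{e-2}\cdot 2}$, and apply Lemma \ref{le:a}(a) with $k = e-2$ and $s = 2$ (legitimate since $2 \le p-1$ as $p \ge 3$, and $e-2 \ge 1$ as $e \ge 3$) to get exactly $\ord_p(h^{i_0} - 1) = e-1$. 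The main obstacle is precisely this choice of $i_0$ and its justification: one must check that $i_0$ really lies in $\{1, \ldots, d-1\}$, which follows from $2p^{e-r-1} - 2p^{e-r-2} = 2p^{e-r-2}(p-1) \ge 1$, and observe that the same $i_0$ works uniformly for every nonzero multiple of $p$ irrespective of its valuation, since $e - \ord_p(px) \le e-1 = \ord_p(h^{i_0}-1)$. Combining the two inclusions gives $F(\omega) = \{[px] \ne [0]\}$. One delicate point is the boundary behaviour: the exponent $i_0 = 2p^{e-r-2}$ requires $r \le e-2$, so the degenerate case $r = e-1$ (where $\Ga_{q,r}$ is the cycle on $q$ vertices and $d = 2$) would have to be examined separately.
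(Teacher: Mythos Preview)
Your argument is correct and follows essentially the same route as the paper: both directions use the same ideas (units cannot be fixed because $[h]$ has order $d$; the exponent $i_0 = 2p^{e-r-2}$ together with Lemma~\ref{le:a}(a) handles all nonzero multiples of $p$). Your observation about the boundary case $r = e-1$ is well taken and is in fact a point the paper's own proof glosses over --- when $r = e-1$ one has $h \equiv -1 \pmod{p^e}$, so $F(\omega) = \emptyset$ and the displayed description of $F$ fails; the lemma (and its use in Theorem~\ref{thm:q}) should really be read with $r \le e-2$.
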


\begin{proof}
Suppose that $[y] \in F(h)$. Then by (\ref{eq:H2}) there exists an $i$, $1 \le i < 2p^{e-r-1}$, such that $[h^i] [y] = [y]$, that is, $\left((p-1)^{p^r i} - 1\right)y \equiv 0 \mod q$. Since $0 < p^r i < 2p^{e-1}$, by part (a) of Lemma \ref{le:order}, we have $\ord_{p}\left((p-1)^{p^r i} - 1\right) < e$. This together with $\left((p-1)^{p^r i} - 1\right)y \equiv 0 \mod q$ implies that $p$ divides $y$ and therefore $F(h) \subseteq \{[px] \ne [0]: x \in \Z\} = \{[px] \in \Z_q: 1 \le x \le p^{e-1} - 1\}$. 

Conversely, consider $[px]$ with $1 \le x \le p^{e-1} - 1$. By (\ref{eq:a}), $p^{e-1}$ divides $(p-1)^{2p^{e-2}} - 1$. Thus $[h^{2p^{e-r-2}}] [px] = [(p-1)^{2p^{e-2}} px] = [px]$ and so $[px] \in F(h)$. Therefore, $\{[px] \in \Z_q: 1 \le x \le p^{e-1} - 1\} \subseteq F(h)$.
\end{proof}

Equivalently, (\ref{eq:fix}) asserts that $X(h)  \setminus \{[0]\} = \Z_q^*$ (as sets). Hence $|X(h) \setminus \{[0]\}| = p^{e-1}(p-1)$. Since $|H_{q,r}| = 2p^{e-r-1}$, it follows from Lemma \ref{le:fx}(b) that $X(h) \setminus \{[0]\}$ is the union of $p^r (p-1)/2$ $H_{q,r}$-orbits on $\Z_q \setminus \{[0]\}$.  

\bigskip
\noindent \textit{Proof of Theorem \ref{thm:q}.}~
Since $-H_{q,r}=H_{q,r}$ (by Lemma \ref{le:order}(b)) and $[1] \in H_{q,r}$, $\Ga_{q,r}$ is an undirected connected circulant graph. Obviously, $\Ga_{q,r}$ has valency $|H_{q,r}| = 2p^{e-r-1}$ and admits $[h]$ (defined in (\ref{eq:H})) as a complete rotation. We saw already that the set of fixed points $F$ of $h$ is given by (\ref{eq:fix}) as claimed. Using this and the definition of $\Ga_{q,r}$, one can see that $F$ is an independent set of $\Ga_{q,r}$. 

We now prove that, if $r \ge 1$, then $F$ is a vertex-cut of $\Ga_{q,r}$. We achieve this by showing that any path in $\Ga_{q,r}$ from $[0]$ to $[p+1]$ must use at least one vertex of $F$. (Note that both $[0]$ and $[p+1]$ are outside $F$.) Suppose to the contrary that there exists a path 
$$
P:  [0], [h^{i_1}], [h^{i_1}] + [h^{i_2}], \ldots, [h^{i_1}] + \cdots + [h^{i_m}] = [p+1]
$$ 
of $\Ga_{q,r}$ whose vertices are all in $X(h)$. As noted in the proof of Lemma \ref{le:order}, each $[h^{i_t}] = [p^{r+1} k_{i_t} + (-1)^{i_t}]$ for some integer $k_{i_t}$. So for any $l$ with $1 \le l \le m$, the $(l+1)$-th vertex of $P$ is $[p^{r+1} k_{l} + a_l]$, where $k_l = \sum_{t=1}^{l} k_{i_t}$ and $a_l = \sum_{t = 1}^l (-1)^{i_t}$. Since all these vertices are in $X(h)$ and are different from $[0]$ (as $P$ is a path and so has no repeated vertices), by (\ref{eq:fix}) we have $a_l \ne 0$, $1 \le l \le m$. Moreover, we can prove by induction that $-(p-1) \le a_l \le p-1$ for each $l$. In fact, this is obviously true when $l=1$. Assume that it is true for some $l$ with $1 \le l < m$. Note that $a_{l+1} = a_l + 1$ or $a_l - 1$. In the former case, we have $a_{l} < p-1$ for otherwise $a_{l+1} = 0$ and $[p^{r+1} k_{l+1} + a_{l+1}] = [p^{r+1} k_{l+1}] \in F$, a contradiction. Hence $-(p-1) \le a_{l+1} \le p-1$. Similarly, one can see that the same inequalities hold when $a_{l+1} = a_l - 1$. Therefore, we have proved $-(p-1) \le a_l \le p-1$ for $1 \le l \le m$. Since the last vertex of $P$ is $[p+1]$, we have $[p^{r+1} k_m + a_m] = [p+1]$. Thus $a_m - 1$ is a multiple of $p$. This together with $-(p-1) \le a_{m} \le p-1$ implies that $a_m = 1$ or $-(p-1)$. If $a_m = 1$, then $[p^{r+1} k_m] = [p]$ and so $p^{r+1}$ divides $p$, which is a contradiction as $r \ge 1$. If $a_m = -(p-1)$, then $[p^{r+1} k_m - p] = [p]$ and so $p^{r+1}$ divides $2p$, which again is impossible as $r \ge 1$. Therefore, we have proved that every path in $\Ga_{q,r}$ from $[0]$ to $[p+1]$ uses at least one vertex of $F$. Consequently, $F$ is a vertex-cut of $\Ga_{q,r}$ when $r \ge 1$. 

Finally, we prove that $F$ is not a vertex-cut of $\Ga_{q,r}$ when $r=0$. Since $H_{q,0} = \{[p k \pm 1]: 0 \le k \le p^{e-1}-1\}$ by (\ref{eq:H1}), one can verify that for $1 \le i < j \le (p-1)/2$, $H_{q,0}[i] \ne H_{q,0}[j]$. Since these $H_{q,0}$-orbits are clearly disjoint from $F$, they must be contained in $X(h)$. Since $X(h) \setminus \{[0]\}$ consists of precisely $(p-1)/2$ $H_{q,0}$-orbits, we conclude that $X(h) \setminus \{[0]\}$ is equal to the union of $H_{q,0}[i]$, $1 \le i \le (p-1)/2$. For each $i$ between $1$ and $(p-1)/2$, there exists a path of $\Ga_{q,0} - F$ from $[0]$ to at least one vertex of $H_{q,0}[i]$, say, the path $[0], [1], [2], \ldots, [i]$. By Lemma \ref{le:fx}(d) we conclude that $F$ is not a vertex-cut of $\Ga_{q,0}$. 
\qed

\medskip
\noindent {\bf Acknowledgements}~ We thank J.~\v{S}ir\'{a}\v{n} for his help on regular Cayley maps and the anonymous referees for their helpful comments. The second author was supported by a Future Fellowship (FT110100629) of the Australian Research Council. 

% Add the bibliography to the table of contents
%\addcontentsline{toc}{chapter}{References}

% What style to use and what file to use for the bibliography
\bibliographystyle{plain}
\bibliography{bibliography/chordalbib}

\small
{

}

\end{document}